\newcommand{\Obj}{\text{Obj}}
\newcommand{\gr}{\operatorname{gr}}
\newcommand{\be}{{\bf 1}}
\newcommand{\CC}{\mathcal{C}}
\newcommand{\C}{\mathcal{C}}
\newcommand{\ot}{\otimes}
\DeclareMathOperator{\End}{End}
\newcommand{\D}{\mathcal{D}}
\DeclareMathOperator{\Hom}{Hom}
\newcommand{\one}{\mathbf{1}}
\newcommand{\id}{\operatorname{id} }
\numberwithin{equation}{section}
\newtheorem{theorem}{Theorem}[section]
\newtheorem{corollary}[theorem]{Corollary}
\newtheorem{lem}[theorem]{Lemma}
\newtheorem{prop}[theorem]{Proposition}
\theoremstyle{definition}
\newtheorem{remark}[theorem]{Remark}
\newtheorem{ex}[theorem]{Example}
\newtheorem{defin}[theorem]{Definition}
\begin{document}
\title[Coherence for  monoidal $G$-categories and $G$-braided categories]
{Coherence for  monoidal $G$-categories and braided $G$-crossed categories}

\author{C\'esar Galindo}
\address{Departamento de Matem\'aticas\\ Universidad de los Andes\\ Carrera 1 N.
18A - 10,  Bogot\'a, Colombia}
\email{cn.galindo1116@uniandes.edu.co}

\begin{abstract}
We prove a coherence theorem for actions of  groups on  monoidal categories. As an application we prove coherence for arbitrary braided $G$-crossed categories.
\end{abstract}
\maketitle

\section{Introduction}

Given a  group $G$ and a  monoidal category $\C$, a \textit{strict  action} of $G$ on $\C$ is a group morphisms from $G$ to $\operatorname{Aut}_\otimes^{\operatorname{Strict}}(\C)$ (the group of all strict monoidal automorphisms of $\C$). For almost all situations where symmetries of monoidal categories arise, strict actions are not sufficient, mainly because the natural notion of symmetry in monoidal category theory is not a strict monoidal automorphism. Rather, a symmetry in a monoidal category is a strong monoidal \textit{auto-equivalence}. The categorical symmetries  of a monoidal category form  a monoidal category. Since every  group $G$ defines a discrete monoidal category  $\overline{G}$, the  appropriate definition of action is  a  monoidal functor from $\overline{G}$ to $\operatorname{End}_{\ot}(\C)$, where $\operatorname{End}_{\ot}(\C)$ is the monoidal category of strong monoidal endofunctors as objects and morphisms given by monoidal natural isomorphism). For simplicity, a monoidal category with a  $G$-action will be called a monoidal $G$-category. If the action is strict we will say that the monoidal $G$-category is strict. Thus, a monoidal $G$-category  is a monoidal category $\C$ with a family of monoidal auto-equivalences $\{(g_*,\psi^g):\C\to \C\}_{g\in G}$ and a family of monoidal isomorphisms $\{\phi(g,h):(gh)_*\to g_*h_*\}_{g,h\in G}$ satisfying certain coherence axioms (see Section \ref{sectio defini G-category}).

The distinction between strict monoidal $G$-categories and  general monoidal $G$-categories is analogous to the relation between strict monoidal categories and  general monoidal categories. In monoidal category theory, the MacLane coherence theorem says that for two expressions $S_1, S_2$ obtained from $X_1\otimes X_2\ot \dots \ot X_n$ by inserting $\one$’s and brackets, any pair of isomorphisma $\Phi:S_1\to S_2$, composed of the associativity and unit constraints and their inverses, are equal. A similar presentation of coherence for arbitrary monoidal $G$-categories can be stated. However, an equivalent statement and convenient  way of expressing the coherence theorem for monoidal $G$-categories is the following: 

\begin{theorem}[Coherence for monoidal $G$-categories]\label{coherencia}
Let $G$ be a  group. Every monoidal $G$-category is equivalent to a strict monoidal $G$-category.
\end{theorem}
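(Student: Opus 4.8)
The plan is to mimic the classical strictification argument for monoidal categories, but carried out $G$-equivariantly. Recall that the standard proof that every monoidal category $\C$ is equivalent to a strict one proceeds by embedding $\C$ into the category $\C'$ of ``$\C$-module endofunctors'' (or, in the elementary version, the full subcategory of $\End(\C)$ generated under composition by the functors $X\otimes -$), where composition of functors is \emph{strictly} associative and the identity functor is a \emph{strict} unit. The functor $L\colon\C\to\C'$, $X\mapsto (X\otimes -)$, is a monoidal equivalence onto its essential image. So first I would set up this construction carefully and record the precise functoriality of $\C\mapsto\C'$: a strong monoidal functor $F\colon\C\to\D$ induces a strong monoidal functor $F'\colon\C'\to\D'$, and this assignment is (pseudo)functorial in $F$, with the comparison 2-cells coming from the monoidal structure on $F$.

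Next I would feed a monoidal $G$-category $(\C,\{(g_*,\psi^g)\},\{\phi(g,h)\})$ into this machine. Each $g_*\colon\C\to\C$ gives $g'_*\colon\C'\to\C'$, which is now a \emph{strict} monoidal endofunctor of the strict monoidal category $\C'$ (strictness of $g'_*$ follows because the monoidal structure on $\C'$ is composition of functors, and $g'_*$ acts by conjugation-type formulas that respect composition on the nose — this is the key computation to check). The monoidal isomorphisms $\phi(g,h)\colon(gh)_*\to g_*h_*$ induce isomorphisms $\phi'(g,h)\colon (gh)'_*\to g'_*h'_*$, and by pseudofunctoriality of the construction together with the coherence axioms that $\phi$ already satisfies in $\C$, the $\phi'(g,h)$ satisfy the same axioms in $\C'$. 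At this stage $\C'$ is a strict monoidal category equipped with strict monoidal endofunctors $g'_*$ and coherent (but not yet trivial) natural isomorphisms $\phi'(g,h)$, and $L\colon\C\to\C'$ is an equivalence of monoidal $G$-categories.

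The remaining task is to further rigidify so that $\phi'(g,h)=\id$ and $g'_*h'_* = (gh)'_*$ strictly, i.e.\ to replace the monoidal functor $\overline{G}\to\End_\otimes(\C')$ by a strict group homomorphism $G\to\Aut_\otimes^{\mathrm{Strict}}(\C'')$. This is the genuinely new, and I expect hardest, step: it is a coherence statement for pseudofunctors out of a group (equivalently, the ``strictification of a categorical group action''), analogous to strictifying a monoidal functor. The cleanest route is a Grothendieck-construction / ``rectification'' argument: form a new category $\C''$ whose objects are pairs $(g,X)$ with $g\in G$ and $X\in\C'$ (think of $X$ formally shifted by $g$), with morphisms $(g,X)\to(h,Y)$ being morphisms $g'_*X \to h'_*Y$ in $\C'$ appropriately twisted; define the $G$-action on $\C''$ by $k\cdot(g,X) = (kg,X)$, which is visibly a strict action, and transport the monoidal structure along. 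One then checks $\C''$ is strict monoidal (inheriting strictness from $\C'$), that the $G$-action is strict monoidal, and that $\C'\simeq\C''$ as monoidal $G$-categories, with all the $\phi$'s absorbed into the definition of morphisms in $\C''$ via the coherence axioms of Section~\ref{sectio defini G-category}. Composing $\C\xrightarrow{L}\C'\to\C''$ gives the desired equivalence with a strict monoidal $G$-category.

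The main obstacle, as noted, is the last step: bookkeeping the two layers of coherence simultaneously (the associativity/unit constraints of $\C$ and the $\phi(g,h)$ of the action) and verifying that the twisted morphism sets in $\C''$ compose associatively and that the monoidal product is well-defined on equivalence classes — this is where the hexagon-type axioms for $\phi$ are consumed. A useful sanity check and potential shortcut is the 2-categorical perspective: a monoidal $G$-category is a pseudofunctor from a suitable 2-categorical delooping of $G$ into the 2-category of monoidal categories, and the theorem is an instance of the general strictification-of-pseudofunctors principle; but since we need an explicit equivalence (not merely an abstract one) for the application to braided $G$-crossed categories, I would carry out the explicit Grothendieck-type construction above rather than invoke the abstract machinery.
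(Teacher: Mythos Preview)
Your two-stage plan --- first strictify the $\psi^g$ by passing to the sequence model $\C'$, then strictify the $\phi(g,h)$ via a Grothendieck-type $\C''$ --- is a reasonable outline, and the first stage works as you say. The paper takes a different, single-step route: it defines $\C(G)$ to have as objects $G$-indexed families $L=(L_g)_{g\in G}$ equipped with isomorphisms $\eta_{g,h}\colon g_*(L_h)\to L_{gh}$ satisfying a cocycle condition, with componentwise tensor product and $G$-action by right index-shift, $(k_*L)_h=L_{hk}$. This strictifies $\psi$ and $\phi$ simultaneously; evaluation at $e$ gives the equivalence back to $\C$.

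The genuine gap is in your second stage. The pair category you describe --- objects $(g,X)$, action $k\cdot(g,X)=(kg,X)$ --- does carry a strict $G$-action, but it does \emph{not} carry a monoidal product for which that action is strict monoidal. The obvious choice $(g,X)\otimes(h,Y)=(e,\,g'_*X\otimes h'_*Y)$ yields
\[
k\cdot\bigl((g,X)\otimes(h,Y)\bigr)=(k,\,g'_*X\otimes h'_*Y)
\quad\text{but}\quad
(k\cdot(g,X))\otimes(k\cdot(h,Y))=(e,\,(kg)'_*X\otimes(kh)'_*Y),
\]
which disagree already in the label. The equivariant alternative $(g,X)\otimes(h,Y)=(g,\,X\otimes(g^{-1}h)'_*Y)$ fails strict associativity: comparing $\bigl((g,X)\otimes(h,Y)\bigr)\otimes(l,Z)$ with $(g,X)\otimes\bigl((h,Y)\otimes(l,Z)\bigr)$ (using that each $k'_*$ is strict monoidal on $\C'$) forces $(g^{-1}h)'_*\circ(h^{-1}l)'_*=(g^{-1}l)'_*$ on the nose, which is exactly the strictness of $\phi'$ you have not yet secured. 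So ``transport the monoidal structure along'' cannot be done as sketched. Your own $2$-categorical remark points to the fix: the standard rectification of a pseudofunctor $\overline{G}\to\End_\otimes(\C)$ uses the category of \emph{sections} (families with cocycle data, i.e.\ homotopy-fixed-point data), not the pair/total-space construction --- and that is precisely the paper's $\C(G)$.
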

In essence, Theorem \ref{coherencia} says that in order to prove a general statement for monoidal $G$-categories,  we may assume without loss of generality to assume that we are working with strict $G$-categories. 

The main result of this paper is to prove Theorem \ref{coherencia}. For this, given a   group $G$ and a monoidal $G$-category $\C$, we construct a  strict monoidal $G$-category $\C(G)$ and a adjoint monoidal $G$-equivalence  $\mathcal{F}:\C\to \C(G)$. In fact, we construct a strict left adjoint 2-functor to the forgetful 2-functor from the 2-category of strict monoidal $G$-categories to the 2-category of monoidal $G$-categories.  

Braided $G$-crossed categories  are  interesting because they have applications to mathematical physics \cite{barkeshli2014symmetry,cui2015gauging,lan2016modular} and low-dimensional topology \cite{MR2674592,MR2959440,MR3195545}. As an application of Theorem \ref{coherencia}, we  prove coherence theorems for general $G$-crossed categories and braided $G$-crossed categories. This coherence theorem generalizes the  M\"{u}ger's coherence theorem for  braided $G$-crossed fusion categories over algebraically closed field of characteristic zero and $G$  finite,  \cite[Appendix 5, Theorem 4.3]{MR2674592}. M\"{u}ger's coherence theorem is obtained as corollary of depth and difficult to prove characterization of braided $G$-crossed fusion categories. The inconveniences of  \cite[Appendix 5, Theorem 4.3]{MR2674592} are that the construction of the strictification is not direct, $G$ must be finite and the conditions on the braided $G$-crossed category is very restrictive. In \emph{loc. cit.} Michael M\"{u}ger asked for a proof of the coherence in a more direct way, extending its domain of validity. Theorem \ref{coherence G-braid} has no restriction on $G$ or the underlying category $\C$ and the constructions of the strictification $\C(G)$ is very explicit. Thus,  Theorem \ref{coherence G-braid} answers M\"{u}ger's question. 

The paper is organized as follows. Section 2 contains preliminaries and notations. In Section 3,
we define the 2-category of monoidal $G$-categories. Section 4 contains the proof of coherence theorem for monoidal $G$-categories. In Sec. 5, we apply the main result to crossed $G$-categories and braided $G$-crossed categories.

\smallbreak\subsubsection*{Acknowledgements} The author is grateful to Paul Bressler for  useful discussions. This research was partially supported by the FAPA funds from Vicerrector\'{i}a de Investigaciones de la Universidad de los Andes.
\section{preliminaries and notations}

Let $\C$ be a category. We denote by $\Obj(\C)$ the class of objects of $\C$ and by $\Hom_\C(X, Y )$ the set of morphisms
in $\C$ from an object $X$ to an object $Y$. Also, by abuse of notation  $X \in \C $ means that $X$ is an object of $\C$.

The symbols  $\CC$ and $\D$ will denote  monoidal categories with unit objects $\one_\CC$ and $\one_\D$ respectively. If no confusion arise, we will denote the unit object of a monoidal category just by $\one$. In order to simplify computations and statements, by monoidal category we will mean a strict monoidal category. This is justified by  the  coherence theorem of S. MacLane  \cite{CWM,cohemac}.

\subsection{Monoidal  functors}\label{sect-monofunctor}
Let $\CC$ and $\D$ be  monoidal categories. A  \emph{monoidal
functor} from $\CC$ to $\D$ is a triple $(F,F_2,F_0)$, where $F:
\CC \to \D$ is a functor, $$ F_2=\{F_2(X,Y) : F(X \otimes Y)
\to F(X) \otimes F(Y)\}_{X,Y \in \CC} $$ are  natural isomorphisms, $F_0:F(\one)\to \one $ is an isomorphism, such that the diagrams
\begin{equation}\label{ax monoidal 1}
\begin{tikzcd}
F(X\ot Y\ot Z) \arrow{dd}{ F_2(X,Y\ot Z)}  \arrow{rrr}{F_2(X\ot Y,Z)} &&& F(X\otimes Y)\ot  F(Z) \arrow{dd}{F_2(X,Y)\ot F(Z)}\\ &&&\\
F(X)\ot F(Y\ot Z) \arrow{rrr}{\id_{F(X)}\ot F_2(Y,Z)}&&& F(X)\ot F( Y)\ot F(Z)
\end{tikzcd}
\end{equation}

\begin{equation} \label{ax monoidal 2}
\begin{tikzcd}
F(X) \arrow{d}{F_2(\one,X)} \arrow{rrr}{F_2(X,\one)} \arrow{rrrd}{\id_{F(X)}} &&& F(X)\ot F(\one) \arrow{d}{\id_{F(X)}\ot F_0}\\
F(\one)\ot F(X)\arrow{rrr}{F_0\ot \id_{F(X)}} &&& F(X)
\end{tikzcd}
\end{equation}
commute for all objects $X,Y,Z \in \CC$.

A monoidal functor $(F,F_2,F_0)$ is  called \emph{unital}
if $F_0$ is the identity morphisms and \emph{strict}
if $F_2$ and $F_0$ are identity morphisms.

\begin{remark}
A monoidal functor $(F,F_2,F_0)$ is  unital if and only if $$F_2(X,\one)=F_2(\one,X)=\id_X$$ for all $X\in \Obj(X)$.
\end{remark}

Let $F,G:\CC \to \D$ be two monoidal
functors. A natural transformation $\varphi=\{\varphi_X : F(X) \to
G(X)\}_{X \in \CC}$ from $F$ to $G$ is \emph{monoidal} if the diagrams
\begin{equation}\label{nattrans1}
\begin{tikzcd}
F(\one) \arrow{rd}{F_0}\arrow{rr}{\varphi_\one}&& G(\one) \arrow{ld}{G_0}\\
& \one&
\end{tikzcd}
\end{equation}
and
\begin{equation}\label{nattrans2}
\begin{tikzcd}
F(X\ot Y)\arrow{rr}{F_2(X,Y)} \arrow{d}{\varphi_{X\ot Y}}&& F(X)\ot F( Y) \arrow{d}{\varphi_{X}\ot\varphi_{ Y}}\\
G(X\ot Y)\arrow{rr}{G_2(X,Y)} && G(X)\ot G(Y)
\end{tikzcd}
\end{equation}
commute for all objects $X,Y$ of $\C$.

\begin{remark}
If $F,G:\CC \to \D$ are two unital monoidal
functors then condition of \eqref{nattrans1} is just $\varphi_\one=\id_\one$.
\end{remark}

If $F: \CC \to \D$ and $G: \D \to \mathcal{E}$ are two monoidal
functors, then their composition $GF:
\CC \to \mathcal{E}$
is a monoidal functor with $(GF)_0=F(G_0)F_0$ and
$$ (GF)_2=\{ G_2(F(X),F(Y)) G(F_2(X,Y))\}_{X,Y \in \CC}.
$$

A monoidal functor $(F,F_2,F_0):\CC\to \D'$ is called \emph{a monoidal equivalence} if the functor $F$ is a equivalence of categories. If $(F,F_2,F_0)$ is a monoidal equivalence, the adjoint functor of $F$ has a canonical monoidal structure, and the unit and counit of the adjointion are monoidal natural isomorphisms, see
\cite[Proposition 4.4.2]{Saavedra}.

\section{$G$-categories}

Given monoidal categories $\C$ and $\D$,  we will denote by
$\operatorname{Fun}_\otimes(\C,\D)$ the category of  monoidal functors from $\C$ to $\D$. The objects are monoidal functors
from  $\C$ to $\D$ and morphisms are monoidal natural isomorphisms. If $\C=\D$, we denote $\operatorname{Fun}_\otimes(\C,\D)$ just by $\operatorname{End}_\otimes(\C)$. The category $\operatorname{End}_\otimes(\C)$ is a strict monoidal category with tensor product given by composition of monoidal functors and unit object the identity endofunctor $\operatorname{Id}_\C.$

Analogously, given monoidal categories $\C$ and $\D$, we define $\operatorname{Fun}_\otimes^u(\C,\D)$ ( respectively $\operatorname{End}_\otimes^u(\C)$) as the full subcategory of  unital monoidal functors from $\C$ to $\D$ (respectively the monoidal category of unital monoidal endofunctors of $\C$).

\subsection{Definition of monoidal $G$-categories.}\label{sectio defini G-category}
Let $G$ be a  group (or a monoid). We will denote by $\overline{G}$ the discrete monoidal category with $\Obj(\overline{G})=G$ and monoidal structure defined by the multiplication of $G$.

A monoidal $G$-category is pair $(\psi,\C)$, where $\C$ is a monoidal category and $\psi:\overline{G}\to \End_{\otimes}(\C)$ is a monoidal functor. Two monoidal $G$-categories  $(\psi,\C)$ and $(\psi',\C)$ are called \textit{strongly} monoidally $G$-equivalent if  $\psi$ and $\psi'$ are monoidal equivalent.

 A monoidal $G$-category is called unital if $\psi$ is a unital monoidal functor from $\overline{G}$ to $\End_{\otimes}^u(\C)$.
\begin{prop}\label{G-cat son unitarios}
Every monoidal $G$-category $(\psi,\C)$ is strongly $G$-equivalent to a unital monoidal $G$-category $(\psi',\C)$.   
\end{prop}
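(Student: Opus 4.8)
The plan is to build $\psi'$ out of $\psi$ by two nested applications of the standard fact $(\star)$: \emph{if $(F,F_2,F_0)\colon A\to B$ is a monoidal functor between (strict) monoidal categories, $H\colon A\to B$ an ordinary functor, and $\theta\colon F\Rightarrow H$ a natural isomorphism, then there is a unique monoidal structure on $H$ making $\theta$ a monoidal natural isomorphism, namely $H_0=F_0\circ\theta_{\one}^{-1}$ and $H_2(X,Y)=(\theta_X\ot\theta_Y)\circ F_2(X,Y)\circ\theta_{X\ot Y}^{-1}$} (the axioms \eqref{ax monoidal 1} and \eqref{ax monoidal 2} for $H$ follow from those for $F$ by a direct diagram chase, using that $\theta$ is natural and invertible). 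Two applications are needed because $\psi$ carries data on two levels, both of which have to be normalized: each value $\psi(g)$ is a monoidal endofunctor of $\C$, i.e.\ an object of $\End_\otimes(\C)$, and it must be made \emph{unital as an endofunctor}, $(\psi(g))_0=\id$; and $\psi$ itself is a monoidal functor $\overline G\to\End_\otimes(\C)$, which must be made \emph{unital as a monoidal functor}, $\psi_0=\id$.

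First I would normalize the endofunctors. For $g\neq e$ I would use the classical device of replacing a monoidal functor by a unital one without altering it away from the unit object: let $g_\bullet\colon\C\to\C$ coincide with $\psi(g)$ on every object and every morphism not meeting $\one_\C$, set $g_\bullet(\one_\C)=\one_\C$, and define $g_\bullet$ on a morphism whose source and/or target is $\one_\C$ by pre- and/or post-composing $\psi(g)$ of that morphism with $(\psi(g))_0$ or its inverse on the appropriate side(s). Then the family with $\one_\C$-component $(\psi(g))_0$ and identity elsewhere is a natural isomorphism $\eta_g\colon\psi(g)\Rightarrow g_\bullet$, and by $(\star)$ with $A=B=\C$ the transported structure makes $g_\bullet$ a monoidal endofunctor with $(g_\bullet)_0=(\psi(g))_0\circ(\psi(g))_0^{-1}=\id$, so $g_\bullet\in\End_\otimes^u(\C)$, while $\eta_g$ becomes a monoidal natural isomorphism, i.e.\ a morphism of $\End_\otimes(\C)$. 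For $g=e$ I would instead take $e_\bullet=\Id_\C$, which is strict hence unital, together with $\eta_e=\psi_0\colon\psi(e)\to\Id_\C$, which is by definition an isomorphism of $\End_\otimes(\C)$.

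Because $\overline G$ is discrete, the family $\eta=\{\eta_g\}_{g\in G}$ is automatically a natural isomorphism from $\psi$ to the functor $\psi'\colon\overline G\to\End_\otimes^u(\C)$ sending $g\mapsto g_\bullet$. Applying $(\star)$ a second time, now with $A=\overline G$ and $B=\End_\otimes(\C)$, I get the unique monoidal structure on $\psi'$ for which $\eta$ is monoidal; its unit constraint is $\psi'_0=\psi_0\circ\eta_e^{-1}=\psi_0\circ\psi_0^{-1}=\id$, so $\psi'$ is a unital monoidal functor, and its tensor constraint is $\psi'_2(g,h)=(\eta_g\ast\eta_h)\circ\psi_2(g,h)\circ\eta_{gh}^{-1}$, where $\ast$ denotes the tensor product of $\End_\otimes(\C)$, i.e.\ horizontal composition of natural transformations. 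It then remains to see that $\psi'$ actually takes values in the subcategory $\End_\otimes^u(\C)$: its objects $g_\bullet$ are unital by construction, and for each $g,h$ the morphism $\psi'_2(g,h)$ runs between $(gh)_\bullet$ and $g_\bullet\circ h_\bullet$, both of which are unital since a composite of unital monoidal endofunctors is unital. Hence $(\psi',\C)$ is a unital monoidal $G$-category, and $\eta\colon\psi\Rightarrow\psi'$ is a monoidal natural isomorphism, so $(\psi,\C)$ and $(\psi',\C)$ are strongly monoidally $G$-equivalent.

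The step I expect to require the most care is the second invocation of $(\star)$: one must check that the doubly transported datum $\psi'_2$ really consists of morphisms of $\End_\otimes^u(\C)$ and, above all, that $\psi'_0=\id$. What makes the latter work is precisely the choice $\eta_e=\psi_0$ in the first step, which is exactly the factor that cancels in $\psi'_0=\psi_0\circ\eta_e^{-1}$; equivalently, one could first normalize at the identity (replace $\psi$ by the monoidally isomorphic functor sending $e$ to $\Id_\C$ and having unit constraint $\id$) and only afterwards unitalize the remaining endofunctors, reaching the same conclusion. Everything else is the routine verification of $(\star)$ and of the stability of unitality under composition of monoidal (endo)functors.
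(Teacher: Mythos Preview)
Your proof is correct and rests on the same underlying construction as the paper: redefine each endofunctor at $\one$ and take the natural isomorphism concentrated there (your $\eta_g$ is exactly the paper's $\sigma$). The only difference is presentational---the paper packages the argument as the statement that the inclusions $\operatorname{Fun}_\otimes^u(\C,\D)\hookrightarrow\operatorname{Fun}_\otimes(\C,\D)$ and $\End_\otimes^u(\C)\hookrightarrow\End_\otimes(\C)$ are equivalences of categories and of monoidal categories and leaves the deduction of the proposition implicit, whereas you spell out the two-level transport of structure explicitly, with your choice $\eta_e=\psi_0$ making the normalization $\psi'_0=\id$ visible.
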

\begin{proof}
It is clear that the proposition follows from the following statement: Let $\C$ and $\D$ be monoidal categories. Then, the inclusion functors $\iota:\operatorname{Fun}_\otimes^u(\C,\D)\to \operatorname{Fun}_\otimes(\C,\D)$ and $\iota:\operatorname{End}_\otimes^u(\C)\to \operatorname{End}_\otimes(\C)$ are equivalences of categories and of monoidal categories, respectively.

Recall that a functor is an equivalence if and only if it is essentially surjective and fully faithful. It is clear that the functors $\iota$ are fully faithful. We only need to see that they are surjective.

By \eqref{ax monoidal 2}, $F_2(X,\be_\C)= \id_{F(X)}\otimes F_0^{-1}$,
$F_2(\be_\C,Y)=F_0^{-1}\otimes \id_{F(Y)}$. Thus, defining  $$F'(X)= \left\{
            \begin{array}{ll}
              F(X), & \hbox{$X\neq \be_\C$; } \\
              \be_\D, & \hbox{$X= \be_\C$.}
            \end{array}
          \right.$$

   $$F_2'(X,Y)=\left\{
     \begin{array}{ll}
       F_2(X,Y), & \hbox{$X\neq \be_\C, Y\neq \be_\C$;} \\
       \id_{F(X)}, & \hbox{if $Y=\be_\C$;}\\
       \id_{F(Y)}, & \hbox{if $X=\be_\C$.}
     \end{array}
   \right.$$
we have a unital monoidal functor $(F',F_2'):\C\to \D$. Finally,  a monoidal natural isomorphisms $\sigma:F\to F'$ is defined by

$$\sigma(X)=\left\{
     \begin{array}{ll}
       \id_{F(X)}, & \hbox{$X\neq \be_\C$;} \\
       F_0, & \hbox{if $X=\be_\C$.}
     \end{array}
   \right.$$
In fact, $\sigma$ is a natural isomorphism since $\Hom_\D(\one_D,\one_D)$ is a commutative monoid and $hh'=h\otimes h'$ for all $h,h'\in \Hom_\D(\one_D,\one_D)$.
\end{proof}
As a consequence of Proposition \ref{G-cat son unitarios}, \textit{from now on we will consider only unital monoidal $G$-categories and we just call them monoidal $G$-categories.}

A monoidal $G$-category $\CC$ consists of the following data:
\begin{itemize}
\item functors $g_*:\C\to \C$ for each $g\in G$,
\item natural isomorphisms $\phi(g,h): (gh)_*\to g_*\circ h_*$ for each pair $g,h\in G$,
\item natural isomorphisms $\psi^g(X,Y): g_*(X\ot Y)\to g_*(X)\ot g_*(Y)$ for all $X,Y \in \Obj(\C)$,
\end{itemize}

such that 

\begin{enumerate}
\item $g_*(\one)=\one$
    \item $\psi^g(X,\one)=\psi^g(\one,X)=\id_X$,
     \item $e_*=\operatorname{Id}_\C$,
    \item $\phi(e,g)=\phi(g,e)=\operatorname{Id}_{g_*}$
\end{enumerate}
 for all $g\in G$, $X\in \Obj(\C)$ and, for all
$g,h,k \in G$ and   $X,Y,Z \in \Obj(\CC)$,   the following diagrams commute:

\begin{equation}\label{otro mas}
\begin{tikzcd}
(ghk)_*(X) \ar{rr}{\phi(gh,k)(X)}  \ar{dd}{\phi(g,hk)(X)}&& (gh)_*k_*(X) \ar{dd}{\phi(g,h)(k_*(X))}\\ \\
g_*(hk)_*(X) \ar{rr}{g_*(\phi(h,k)(X))}&& g_*h_*k_*(X)
\end{tikzcd}
\end{equation}

\begin{equation}\label{g es monoidal}
\begin{tikzcd}
g_*(X\ot Y\ot Z) \ar{dd}{\psi^g(X,Y\ot Z)} \ar{rrr}{\psi^g(X\ot Y,Z)}&&& g_*(X\ot Y)\ot g_*(Z) \ar{dd}{\psi^g(X,Y)\otimes \id_{g_*(Z)}}\\\\
g_*(X)\ot g_*(
Y\ot Z) \ar{rrr}{\id_{g_*(X)}\ot \psi^g(Y,Z)}&&& g_*(X)\ot g_*(Y)\ot g_*(Z)
\end{tikzcd}
\end{equation}

\begin{equation}\label{compatibilidad psi y phi}
\begin{tikzcd}
(gh)_*(X\ot Y) \ar{rr}{\psi^{gh}(X,Y)}  \ar{dd}{\phi(g,h)_{X\ot Y}} && (gh)_*(X)\ot (gh)_*(Y) \ar{dd}{\phi(g,h)_X\ot \phi(g,h)_Y}\\\\
g_*h_*(X\ot Y)\ar{rd}{g_*(\psi^h(X,Y))} &&g_*h_*(X)\ot g_*h_*(Y)\\
& g_*(h_*(X)\ot h_*(Y)) \ar{ru}{\psi^g(h_*(X),h_*(Y))}&
\end{tikzcd}
\end{equation}

A monoidal $G$-category is called strict if $\psi^g$ and $\phi(g,h)$ are identities for all $g,h\in G$.

Given $\C$ and $\D$ monoidal $G$-categories, a monoidal $G$-functor is a pair $(F,\gamma)$, where $F:\C\to \D$ is a  monoidal functor and $\gamma(g): g_*\circ F\to F\circ g_*$ is a family of monoidal natural isomorphisms indexed by $G$, such that $\eta(e)=\operatorname{Id}_F$ and for all $X\in \Obj(\C)$, $g,h\in G$ the diagrams

\begin{equation}\label{diagrama G-funtores}
\begin{tikzcd}
(gh)_* F(X)  \ar{rr}{\eta(gh)_X} \ar{dd}{\phi(g,h)_{F(X)}}&&  \ar{dd}{F(\phi(g,h)_{X})}  F((gh)_*(X)) \\\\
g_*h_*(F(X))  \ar{rd}{g_*(\eta(h)_{F(X)})}&& F(g_*h_*(X))\\
&g_*(F(h_*(X))) \ar{ur}{\eta(g)_{h_*(X)}}&
\end{tikzcd}
\end{equation}
commute.

We  say that $(F,\eta)$ is an equivalence of monoidal $G$-categories if the functor $F$ is an equivalence of categories. If $\C=\D$, a strongly equivalence is just an equivalence of monoidal $G$-categories of the form $(\operatorname{Id}_\C,\eta)$.

If $(F,\eta), (L,\chi):\C\to \D$ are monoidal $G$-functors, a monoidal natural transformation $\varphi:\C\to \D$ is called a monoidal natural transformation of $G$-categories if the diagrams
\begin{equation}
\begin{tikzcd}
F(g_*(X)) \ar{r}{\varphi_{g_*(X)} } & L(g_*(X)) \\\ar{u}{\eta(g)_X} 
g_*(F(X)) \ar{r}{g_*(\varphi_{X})}& g_*(L(X))\ar{u}{\chi(g)_X}
\end{tikzcd}
\end{equation}
commute for all $X\in \C$ and $g\in G$.

\subsection{Weak actions on Crossed modules}\label{Example action over pointed fusion categories}

The goal of this section is to present some examples of  monoidal $G$-categories associated to crossed modules. 

Recall that a \emph{crossed module} is a pair of groups $P$ and $H$, a left action of $P$ on $H$ (by group automorphisms)
\begin{align*}
  P\times H &\to H\\
(g,h) &\mapsto \ ^gh,  
\end{align*}
and a group homomorphism $\partial: H \to P$, such that $\partial$ is $G$-equivariant:
\[
\partial(^hg) = g \partial(h) g^{-1}
\]
and $\partial$ satisfies the so-called Peiffer identity:
\[
^{\partial(h_1)}h_2=h_1h_2h_1^{-1}
.\]
Note that $\operatorname{Im}(\partial)\subset P$ is a normal subgroup, then $\operatorname{coker}(\partial)$ is a group. 
\begin{ex}
\begin{itemize}
    \item Let $H$ be a normal subgroup of a group $P$. The group $P$ acts by conjugation on $H$ and $\partial$ given by the inclusion defines a crossed module $(H,P,\partial)$.

\item Let
\[1\to A\to H\overset{\partial}{\to}P \to 1,\]
be a central extension of groups. If $\iota:P\to H$ is a section (of sets) of $\partial$, the group $P$ acts on $H$ by $^ph=\iota(p)h\iota(h)^{-1}$. Since $A$ is central the action does not depend of the choice of $\iota$. The projection $\partial: H\to P$, defines a crossed module  $(H,P,\partial)$. 

\end{itemize}
\end{ex}

If $(H,P,\partial)$ and $(H',P',\partial')$ are crossed modules, a morphism $(\alpha,\phi):(H,P,\partial)\to (H',P',\partial')$ of a crossed modules is a commutative diagram of group morphisms 
\[
\xymatrix{
H \ar[d]_{\partial}  \ar[r]^{\alpha}& H' \ar[d]^{\partial'}\\
P \ar[r]_{\phi} & P'
}\]such that $\alpha(^gh)=\ ^{\phi(g)}\alpha(h)$, for all $h\in H, g\in P$. A morphism $(\alpha,\phi):(H,P,\partial)\to (H',P',\partial')$ is called a \emph{weak equivalence} if induces group isomorphisms \[\operatorname{ker}(\partial)\cong \operatorname{ker}(\partial'),\quad \operatorname{coker}(\partial)\cong \operatorname{coker}(\partial).\]

Given two morphisms $(\alpha,\phi), (\alpha',\phi'):(H,P,\partial)\to (H',P',\partial')$  we define a \emph{natural transformation}  $\theta:(\alpha,\phi)\Rightarrow (\alpha',\phi')$ as a map \[\theta:P\to H^{\operatorname{Im}(\phi)}=\{h\in H:\ ^gh=h, \forall g\in \operatorname{Im}(\phi)\}\] such that 
\[\phi'(g)=\partial(\theta(g))\phi(g),\quad
    \theta(g)\alpha'(h)=\alpha(h)\theta(g),\]
\[2\]
for all $g \in P, h\in H$.

Let $G$ be a group and $(H,P,\partial)$ a crossed module. A \emph{weak action} of $G$ on $(H,P,\partial)$ consists of the following data:
\begin{itemize}
    \item a morphism $(\alpha_x,\phi_x)$ for each $x\in G$,
    \item natural transformations $\theta_{x,y}: (\alpha_{xy},\phi_{xy})\to (\alpha_x\alpha_y,\phi_x\phi_y)$ for each pair $x,y\in G$,
\end{itemize}such that 
\begin{enumerate}
\item $(\alpha_e,\phi_e)= (\operatorname{Id}_H,\operatorname{Id}_G)$
\item $\theta_{e,x}=\theta_{x,e}$ are the constant function $e$.
\item $\theta_{xy,z}(g)\theta_{x,y}(\phi_z(g))=\theta_{x,yz}(g)\alpha_x(\theta_{y,z}(g))$
\end{enumerate}for all $x,y,z\in G, g\in P$

\begin{ex}
Let \[1\to H\overset{\partial}{\to} P\overset{\pi}{\to} G\to 1,\] be an exact sequence of groups and $\iota: Q\to G$ a section (of sets) of $\pi$. The group $G$ acts on  $(H,P,\partial)$ by \[\phi_g(x)=\iota(g)x\iota(g)^{-1},\quad \alpha_g(h)=\iota(g)h\iota(g)^{-1}, \]
\[\theta_{g,h}(x)=\alpha_g\circ \alpha_h(x)\alpha_{gh}(x)^{-1},\]for all $h\in H,x\in P, g,h\in G$
\end{ex}

We can build a small strict monoidal category $\C(H, P , \partial)$ (in fact, a strict categorical group) from a crossed module $(H, P , \partial)$ as follows. First we let
the set of object by $P$ and the set of arrow the semidirect product $H \rtimes P $ in which tensor product is given by the multiplication 
\[(h,g)(h',g')=(h(^gh'),gg').\]
We define source and target maps $s, t:  H\rtimes P\to P$ by:
\[s(h,g)=g,\quad t(h,g)=\ \partial(h)g,\] define the identity-assigning map 
\begin{align*}
    i:P&\to H\rtimes P\\
    g &\mapsto (1,g),
\end{align*}and define the composite of morphisms \[(h,g):g\to g',\quad (h',g'):g'\to g'',\]to be \[(hh',g):g\to g''.\]
See \cite{crossed} for more details.

Every weak action $\{\alpha_x,\phi_x,\theta_{x,y}\}_{x,y\in G}$ of a group $G$ on a crossed module $(H,P,\partial)$ defines an action of $G$ on the monoidal category $\C(H,P,\partial)$. In fact, every morphism $(\alpha_x,\phi_x)$  defines a strict monoidal functor  $F_{(\alpha_x,\phi_x)}:\C(H,P,\partial)\to \C(H,P,\partial),$  \[F_{(\alpha_x,\phi_x)}(g)=\phi_{x}(g),\quad F_{(\alpha_x,\phi_x)}(h,g)=(\alpha_x(h),\phi_x(g)),\] and every natural transformation $\theta_{x,y}$ defines a monoidal natural isomorphism $\theta_{x,y}: F_{(\alpha_{xy},\phi_{xy})}\to F_{(\alpha_x,\phi_x)}\circ F_{(\alpha_x,\phi_x)}$ by \[(\theta_{x,y}(g),\phi_{xy}(g)):\phi_{xy}(g)\to \phi_x\circ \phi_y(g).\]

\section{Coherence for monoidal $G$-categories}
Let $G$ be a  group and let $\C$ be a monoidal $G$-category. We define the category $\C(G)$ as follows:
the objects of $\C(G)$ are pairs $(L,\eta)$, where $L=\{L_g\}_{g\in G}$ is a family of objects of $\C$ and 
\begin{equation}
\eta=\{\eta_{g,h}:g_*(L_h) \to L_{gh} \}_{(g,h)\in G\times G}
\end{equation}is a family of isomorphisms such that $\eta_{e,g}=\id_{L_g}$ for all $g\in G$ and  the diagrams

\begin{equation}\label{def de L}
\begin{tikzcd}
 (gh)_*(L_k) \ar{d}{\phi(g,h)_{L_k}}  \ar{rr}{\eta_{gh,k}}  &&  L_{ghk}   \\
g_*h_*(L_k) \ar{rr}{g_*(\eta_{h,k})} && g_*(L_{hk}) \ar{u}{\eta_{g,hk}}
\end{tikzcd}
\end{equation}commute for all $g,h,k\in G$.

A morphism from $(L,\eta)$ to $(T,\chi)$ is a familiy of morphisms \[f=\{f_g:L_g\to T_g\}_{g\in G}, \] such that 
the diagrams
\begin{equation}\label{comm tensor prod flechas}
\begin{tikzcd}
g_*(L_h)  \ar{d}{g_*(f_h)}  \ar{rr}{\eta_{g,h}}  &&   L_{gh}  \ar{d}{f_{gh}} \\
g_*(T_h) \ar{rr}{\chi_{g,h}} && T_{gh}
\end{tikzcd}
\end{equation}commute for all $g,h \in G$. The composition of  $f: (L,\eta)\to (L',\eta')$ and $l:(L',\eta')\to (L'',\eta'')$ is $lf:=\{l_gf_g\}_{g\in G}$. This  composition is well defined because of  functoriality of  $g_*$.

Consider now the functor $\mathcal{F}: \C\to \C(G)$ defined as follows: For every object $X$ in $\C$, $\mathcal{F}(X)=(\overline{X},\phi_X^{-1})$, where $\overline{X}_g=g_*(X)$ and $$(\phi_X^{-1})_{g,h}:=\phi(g,h)_X^{-1}:g_*(\overline{X}_h)=g_*h_*(X)\to \overline{X}_{gh}=(gh)_*(X).$$ The pair  $(\overline{X},\phi_X^{-1})$ is an object in $\C(G)$ by the commutativity of diagram \eqref{otro mas}. Given $l:X\to Y$ a morphism in $\C$,  we define $\mathcal{F}(l):(\overline{X},\phi_X^{-1})\to (\overline{Y},\phi_Y^{-1})$ as $\mathcal{F}(l)=\{g_*(l)\}_{g\in G}$. The family of morphisms $\mathcal{F}(l)$ is a morphism in $\C(G)$ because  $\phi^{-1}(g,h):g_*h_*\to (gh)_*$ are natural isomorphisms for each pair $g,h\in G$ and $\mathcal{F}(l)\circ \mathcal{F}(f)=\mathcal{F}(l\circ f)$ because $g_*$ are functors for each $g\in G$.

\begin{prop}\label{prop equivalencia}
The functor $\mathcal{F}: \C\to \C(G)$ is an adjoint equivalence of  categories with adjoint functor $U_e:\C(G)\to \C, (L,\eta)\mapsto L_e, f\mapsto f_e$.
\end{prop}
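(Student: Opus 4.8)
The plan is to verify that $U_e\circ\F=\Id_\C$ on the nose and that $\F\circ U_e$ is naturally isomorphic to $\Id_{\C(G)}$, and then to check the triangle identities so that the equivalence is in fact an adjoint equivalence. First I would compute $U_e\F(X)=\overline{X}_e=e_*(X)=X$ using axiom (3), and $U_e\F(l)=e_*(l)=l$; so $U_e\F=\Id_\C$ strictly. For the other composite, given an object $(L,\eta)$ of $\C(G)$, we have $\F U_e(L,\eta)=\F(L_e)=(\overline{L_e},\phi_{L_e}^{-1})$ with $\overline{L_e}_g=g_*(L_e)$. The natural candidate for an isomorphism $(\overline{L_e},\phi_{L_e}^{-1})\to(L,\eta)$ in $\C(G)$ is the family $\Theta_{(L,\eta)}=\{\eta_{g,e}:g_*(L_e)\to L_{ge}=L_g\}_{g\in G}$.

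The key steps are then: (i) check that $\Theta_{(L,\eta)}$ is a morphism in $\C(G)$, i.e.\ that the square \eqref{comm tensor prod flechas} with $\chi=\eta$, $f_g=\eta_{g,e}$, and source structure maps $(\phi_{L_e}^{-1})_{g,h}=\phi(g,h)_{L_e}^{-1}$ commutes; this is exactly diagram \eqref{def de L} for the triple $(g,h,e)$, rearranged (one uses $\eta_{e,*}=\id$ only implicitly here, the real content is associativity coherence \eqref{def de L}). (ii) Check $\Theta$ is natural in $(L,\eta)$: for a morphism $f\colon(L,\eta)\to(T,\chi)$ the relevant square reads $f_g\circ\eta_{g,e}=\chi_{g,e}\circ g_*(f_e)$, which is precisely \eqref{comm tensor prod flechas} for the pair $(g,e)$. (iii) Each $\eta_{g,e}$ is invertible by hypothesis, so $\Theta$ is a natural isomorphism. (iv) Finally the triangle identities: $U_e(\Theta_{(L,\eta)})=\eta_{e,e}=\id_{L_e}$ since $\eta_{e,-}=\id$, which gives one triangle; and $\Theta_{\F(X)}=\{\phi(g,e)^{-1}_X\}=\{\id_{g_*(X)}\}$ by axiom (4), which gives the other. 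Hence $(\F,U_e,\mathrm{id},\Theta)$ is an adjoint equivalence.

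I do not expect a serious obstacle here; the proof is a direct unwinding of the definitions, and every commuting square needed is a special case of \eqref{def de L} or \eqref{comm tensor prod flechas}. The one point that deserves care is bookkeeping the direction of the maps: the structure isomorphisms of $\F(X)$ are the \emph{inverses} $\phi(g,h)_X^{-1}$, so when checking that $\Theta$ respects structure one must transpose diagram \eqref{def de L} appropriately, and when verifying $\Theta_{\F(X)}=\id$ one invokes $\phi(g,e)=\Id_{g_*}$ from axiom (4) rather than (3). A secondary point is to note that well-definedness of the various composites (e.g.\ that $\Theta$ composed with a morphism of $\C(G)$ is again such a morphism) is automatic from functoriality of each $g_*$, exactly as already observed for $\F$ in the paragraph preceding the proposition.
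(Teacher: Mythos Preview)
Your proof is correct and follows essentially the same approach as the paper: the counit is the identity (since $U_e\circ\F=\Id_\C$), the unit is $\Theta_{(L,\eta)}=\{\eta_{g,e}\}_{g\in G}$, naturality comes from \eqref{comm tensor prod flechas} with $h=e$, and the triangle identities follow from $\eta_{e,e}=\id$ and $\phi(g,e)=\Id_{g_*}$. Your step (i)---verifying that $\Theta_{(L,\eta)}$ is actually a morphism in $\C(G)$ via \eqref{def de L} with $k=e$---is a detail the paper leaves implicit but which is indeed needed.
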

\begin{proof}
We define the unit isomorphism of the adjoint equivalence $\eta:  \mathcal{F}\circ  U_e\to \operatorname{Id}_{\C}$ as  $$\eta_{(L,\eta)}:=\{\eta_{g,e}\}_{g\in G}.$$ The naturality of  $\eta$ follows from the diagrams \eqref{comm tensor prod flechas}, taking $h=e$. The counit isomorphism of the adjoint equivalence is the identity since $U_e \circ \mathcal{F}=\operatorname{Id}_{\C}$. The counit-unit equations follow immediately from the definitions.
\end{proof}

\begin{prop}
The category $\C(G)$ is a strict monoidal category with tensor product of objects $(L,\eta)\otimes (L',\eta')=(LL',\eta\eta')$, where   $(LL')_g:=L_g\ot L_g',$
\begin{equation}
\begin{tikzcd}
g_*(L_h\ot L_h')   \ar{rd}{\psi^g(L_h,L_h')}    \ar{rr}{(\eta\eta')_{g,h}} && L_{gh}\otimes L_{gh}'   \\
& g_*(L_h)\ot g_*(L_h') \ar{ru}{\eta_{g,h}\ot \eta'_{g,h }}  &
\end{tikzcd}
\end{equation}
tensor product of morphisms $f\otimes l=\{f_g\otimes l_g\}_{g\in G}$, and unit object  $(\overline{\one},\id_{\one})$, where $\overline{\one}_g=\one,$ for all $g\in G$.
\end{prop}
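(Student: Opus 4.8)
The statement asks us to verify that $\C(G)$, equipped with the described tensor product, is a strict monoidal category. The plan is to check, in order: (i) that the tensor product of objects $(L,\eta)\otimes(L',\eta')$ is a well-defined object of $\C(G)$, i.e. that the family $(\eta\eta')_{g,h}$ consists of isomorphisms satisfying $(\eta\eta')_{e,g}=\id$ and the hexagon-type axiom \eqref{def de L}; (ii) that the tensor product of morphisms $f\otimes l$ is a morphism in $\C(G)$, i.e. satisfies \eqref{comm tensor prod flechas}; (iii) bifunctoriality of $\otimes$; (iv) that $(\overline{\one},\id_\one)$ is a strict unit; and (v) strict associativity.

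\textbf{Steps (i)--(ii).} For (i), the composite $(\eta\eta')_{g,h} = (\eta_{g,h}\otimes\eta'_{g,h})\circ\psi^g(L_h,L_h')$ is a composition of isomorphisms, hence an isomorphism; and for $g=e$ it reduces to $\id_{L_h}\otimes\id_{L_h'}$ since $e_*=\Id_\C$, $\psi^e = \id$ (as $e_* = \Id$ forces $\psi^e$ trivial by the unital/strictness conventions), so $(\eta\eta')_{e,h}=\id$. The axiom \eqref{def de L} for $\eta\eta'$ is obtained by pasting together the diagram \eqref{def de L} for $\eta$, the one for $\eta'$, the naturality square of $\psi^g$ applied to $\eta_{h,k}$ (or $\eta'_{h,k}$), and the compatibility diagram \eqref{compatibilidad psi y phi} relating $\psi^{gh}$, $\psi^g$, $\psi^h$ and $\phi(g,h)$; this is a routine but somewhat large diagram chase. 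For (ii), \eqref{comm tensor prod flechas} for $f\otimes l$ follows from the same condition for $f$ and for $l$ together with the naturality of $\psi^g$ with respect to the morphism $f_h\otimes l_h$.

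\textbf{Steps (iii)--(v).} Bifunctoriality (iii) is immediate from bifunctoriality of $\otimes$ in $\C$, componentwise. For the unit (iv): on objects, $(\overline{\one},\id_\one)\otimes(L,\eta)$ has $g$-component $\one\otimes L_g = L_g$ by strictness of $\C$, and its structure isomorphism is $(\eta_{g,h}\otimes\eta'_{g,h})\circ\psi^g(\one,L_h) = \id_\one\otimes\eta'_{g,h}$ using axiom $\psi^g(\one,X)=\id$, which equals $\eta'_{g,h}$; hence the left unit is strictly the identity, and symmetrically on the right, and likewise on morphisms. For strict associativity (v): both $((L,\eta)\otimes(L',\eta'))\otimes(L'',\eta'')$ and $(L,\eta)\otimes((L',\eta')\otimes(L'',\eta''))$ have $g$-component $L_g\otimes L'_g\otimes L''_g$ (equal on the nose by strict associativity of $\C$), and both structure isomorphisms equal $(\eta_{g,h}\otimes\eta'_{g,h}\otimes\eta''_{g,h})\circ\psi^g(L_h,L'_h,L''_h)$ once we use the coherence diagram \eqref{g es monoidal} for $\psi^g$ to identify the two ways of decomposing $g_*(L_h\otimes L'_h\otimes L''_h)$; thus the associativity constraint is the identity. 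Since all these identifications are on the nose, $\C(G)$ is strict monoidal.

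\textbf{Main obstacle.} The only non-formal step is (i), verifying axiom \eqref{def de L} for the tensored structure map $\eta\eta'$: the verification requires carefully combining \eqref{def de L} for $\eta$ and $\eta'$ with the two monoidal-coherence diagrams \eqref{g es monoidal} and \eqref{compatibilidad psi y phi} for the action, and keeping track of the naturality squares of the $\psi^g$. Everything else reduces, componentwise, to the corresponding (strict) property of $\C$ together with the normalization axioms $g_*(\one)=\one$, $\psi^g(\one,-)=\psi^g(-,\one)=\id$, $e_*=\Id_\C$, $\phi(e,-)=\phi(-,e)=\id$.
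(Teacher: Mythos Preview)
Your proposal is correct and follows essentially the same approach as the paper: the paper likewise verifies (i) that $(\eta\eta')$ satisfies \eqref{def de L} by combining \eqref{def de L} for $\eta,\eta'$, naturality of $\psi^g$, and the compatibility \eqref{compatibilidad psi y phi}; (ii) that $f\otimes l$ is a morphism via naturality of $\psi^g$; then dispatches the unit as strict by definition; and proves strict associativity using \eqref{g es monoidal}. One small remark: in your ``Main obstacle'' paragraph you list \eqref{g es monoidal} among the ingredients for step (i), but only \eqref{compatibilidad psi y phi} (together with naturality of $\psi^g$ and \eqref{def de L}) is actually needed there---\eqref{g es monoidal} enters only in the associativity step (v), exactly as you say in your main text.
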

\begin{proof}
First, we will see that the tensor product is well defined.   In order to check that $(L,\eta)\otimes (L',\eta')\in \C(G)$, we need to see that  the diagrams \eqref{def de L} commute for $\eta\eta'$, that is, $$(\eta\eta')_{gh,k}=\eta_{g,hk}\circ g_*((\eta\eta')_{h.k})\phi(g,k)_{L_k\otimes L_k'},$$for all $g,h,k\in G$.

We will use the following equations:
\begin{equation}\label{naturallidad de psi}
\psi^g(L_{hk},L_{hk})\circ g_*(\eta_{h,k}\ot \eta_{h,k}')=\big ( g_*(\eta_{h,k})\ot g_*(\eta'_{h,k}) \big )\circ \psi^g(h_*(L_k),h_*(L_k'))
\end{equation} 
\begin{multline}\label{compatibilidad}
\psi^g(h_*(L_k),h_*(L_k'))\circ g_*(\psi^h(L_k,L_k'))\circ \phi(g,h)_{L_k\ot L_k'}\\ 
=\big( \phi(g,h)_{L_k}\ot \phi(g,h)_{L_k'}\big)\circ \psi^g (L_K,L_k')
\end{multline} 
for all $g,k,k\in G$. Equation \eqref{naturallidad de psi} follows because $\psi^g$ is a monoidal natural isomorphisms and equation \eqref{compatibilidad} follows from diagram \eqref{compatibilidad psi y phi}.

Thus,
\begin{multline*}
\eta_{g,hk}\circ g_*((\eta\eta')_{h.k})\circ\phi(g,k)_{L_k\otimes L_k'} =^{\tiny (1)}\ \Big[\big( \eta_{g,hk}\ot \eta'_{g,hk} \big)\circ \psi^g(L_{hk},L'_{hk}) \Big]\\
\circ g_*\Big[ \big(\eta_{h,k}\ot\eta'_{h,k}) \big)\circ \psi^h(L_K,L_k')\Big]\circ \phi(g,h)_{L_k\ot L_k'}\\
=^{\tiny (2)}\ \big( \eta_{g,hk}\ot \eta'_{g,hk} \big)\circ  \Big[ \psi^g(L_{hk},L'_{hk})\circ g_*\big(\eta_{h,k}\ot\eta'_{h,k} \big)\Big] 
\circ \Big[g_*\big(\psi^h(L_K,L_k')\big)\circ \phi(g,h)_{L_k\ot L_k'}\Big]\\
=^{\tiny (3)}\ \big( \eta_{g,hk}\ot \eta'_{g,hk} \big)\circ  \Big[\big ( g_*(\eta_{h,k})\ot g_*(\eta'_{h,k}) \big )\circ \psi^g(h_*(L_k),h_*(L_k'))\Big]  \\
\circ \Big[g_*\big(\psi^h(L_K,L_k')\big)\circ \phi(g,h)_{L_k\ot L_k'}\Big]\\
=^{\tiny (4)}\ \Big[ (\eta_{g,hk}\circ g_*(\eta_{h,k}))\ot (\eta'_{g,hk}\circ g_*(\eta'_{h,k})) \Big] \\ \circ \Big[\psi^g(h_*(L_k),h_*(L_k'))\circ g_*(\psi^h(L_k,L_k'))\circ \phi(g,h)_{L_k\ot L_k'} \Big]\\
=^{\tiny (5)}\ \Big[ (\eta_{g,hk}\circ g_*(\eta_{h,k}))\ot (\eta'_{g,hk}\circ g_*(\eta'_{h,k})) \circ \big( \phi(g,h)_{L_k}\ot \phi(g,h)_{L_k'}\big)\Big]\circ \psi^g (L_K,L_k') \\
=^{\tiny (6)}\ \Big[ (\eta_{g,hk}\circ g_*(\eta_{h,k})\circ \phi(g,h)_{L_k})\ot (\eta'_{g,hk}\circ g_*(\eta'_{h,k})\circ \phi(g,h)_{L_k}') \Big]\circ \psi^g (L_K,L_k')\\
=^{\tiny (7)}\ \big(\eta_{gh,k}\otimes \eta_{gh,k}' \big)\circ \psi^g (L_K,L_k')=^{\tiny (8)}\  (\eta\eta')_{gh,k}.
\end{multline*}
The  equality (1) from  definition, the equality (2)  by the functoriality of $g_*$, the equality (3) from equation \eqref{naturallidad de psi} , the equality (4) from bifunctoriality of $\otimes$, the equality (5) from equation \eqref{compatibilidad}, the equality (6) from bifunctoriality of $\otimes$, the equality (7) from diagram \eqref{def de L}  and the equality (8) from definition.

Let $f:(L,\eta)\to (T,\chi)$ and $l:(L',\eta')\to (T',\chi')$ be morphisms in $\CC(G)$.  In order to show that $f\otimes g:=\{ f_g\otimes l_g \}_{g\in G}$ is a morphism  $\CC(G)$ from $(L,\eta)\ot (L',\eta')$ to $(T,\chi)\ot (T',\chi')$ we need to check the equation 
\begin{equation}\label{nat}
\big( g_*(f_h)\ot g_*(l_h)\big )\circ \psi^g (L_h,L'_h)=\psi^g(T_h,T'_h)\circ g_*(f_h\otimes l_h)
\end{equation}which follows from the monoidal naturality of $\psi^g$.

Then,
\begin{multline*}
\big(f_{gh}\ot l_{gh}\big)\circ (\eta\eta')_{g,h}=^{\tiny (1)}\  \big(f_{gh}\ot l_{gh}\big)\circ (\eta_{g,h}\ot \eta_{g,h}')\circ \psi^g (L_h,L_h')\\
=^{\tiny (2)}\  \big(f_{gh}\eta_{g,h}\ot l_{gh}\eta'_{g,h}\big)\circ \psi^g (L_h,L_h')=^{\tiny (3)}\  \big(\chi_{g,h}g_*(f_h)\ot\chi_{g,h}'g_*(l_h) \big)\circ \psi^g (L_h,L_h')\\
=^{\tiny (4)}\  (\chi_{g,h}\ot \chi_{g,h}')\big( g_*(f_h)\ot g_*(l_h)\big)\circ \psi^g (L_h,L_h')\\
=^{\tiny (5)}\  (\chi_{g,h}\ot \chi_{g,h}')\circ \psi^g(T_h,T'_h)\circ g_*(f_h\otimes l_h)=^{\tiny (6)}\   (\chi\chi')_{g,h}\circ g_*(f_h\otimes l_h).
\end{multline*}
The  equality (3) follows from diagram \eqref{comm tensor prod flechas} and the equality (5) from equation \eqref{nat}.

It follows from the definition that $(\overline{\one},\id_{\one})$ is a strict unit object.

Finally, we will show that the tensor product is strictly  associative. Let $(L,\eta)$,  $(L',\eta'),$ and $(L'',\eta''),$ objects in $\C(G)$. Then,
\begin{multline*}
((\eta\eta')\eta'')_{g,h}=^{\tiny (1)}\ (\eta\eta')_{g,h}\ot \eta_{g,h}\circ \psi^g (L_h\ot L_h',L_h'')\\
=^{\tiny (2)}\  \Big(\eta_{g,h}\ot \eta_{g,h}'\ot \eta_{g,h}''\Big)\circ \Big ((\psi^g (L_h\ot L_h',L_h'')\ot\id_{g_*(L''_h)}) \circ \psi^g (L_h\ot L_h',L_h'') \Big )\\
=^{\tiny (3)}\  \Big(\eta_{g,h}\ot \eta_{g,h}'\ot \eta_{g,h}''\Big)\circ \Big ((\id_{g_*(L_h)}\ot \psi^g(L_h',L_h''))\circ \psi^g (L_h,L_h'\ot L_h'') \Big )\\
=^{\tiny (4)}\  \eta_{g,h}\ot (\eta'\eta'')_{g,h}\circ \psi^g(L_h,L_h'\ot L_h'')=(\eta(\eta'\eta')')_{g,h}
\end{multline*}
The equality (3) follows from the commutativity of diagram  \eqref{g es monoidal}. Hence, the tensor product is associative and $\C(G)$ is a strict monoidal category.
\end{proof}

The monoidal category  $\C(G)$ is a strict monoidal $G$-category with action on objects $g_*(L,\eta):=(gL,g\eta),$  where $(gL)_h:=L_{hg}$ and $(g\eta)_{x,y}:=\eta_{x,yg}$ for all $g, h,x,y\in G$ and action on morphisms $g_*(f)_h=f_{hg}$, for all $g,h\in G$.

The following theorem implies Theorem \ref{coherencia}.
\begin{theorem}\label{main theorem}
Let $\C$ be a monoidal $G$-category. The strict monoidal functor  $U_e:\C(G)\to \C$ with the natural isomorphisms $$\gamma(g)_{(L,\eta)}:=\eta_{g,e}:g_*(U_e(L,\eta))\to U_e(g_*(L,\eta))$$ is an equivalence of monoidal $G$-categories.
\end{theorem}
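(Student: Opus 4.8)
The plan is to verify that the pair $(U_e,\gamma)$ satisfies the definition of a monoidal $G$-functor and then invoke Proposition~\ref{prop equivalencia} to conclude it is an equivalence. First I would record that $U_e$ is strict monoidal: by definition of the tensor product in $\C(G)$ we have $U_e((L,\eta)\otimes(L',\eta'))=(LL')_e=L_e\otimes L_e'=U_e(L,\eta)\otimes U_e(L',\eta')$, and $U_e(\overline{\one},\id_\one)=\one$, so $(U_e)_2$ and $(U_e)_0$ are identities. Next I would check that each $\gamma(g)$ is a well-defined natural transformation $g_*\circ U_e\to U_e\circ g_*$: unwinding the definitions, $g_*(U_e(L,\eta))=g_*(L_e)$ while $U_e(g_*(L,\eta))=U_e(gL,g\eta)=(gL)_e=L_{eg}=L_g$, so $\gamma(g)_{(L,\eta)}=\eta_{g,e}\colon g_*(L_e)\to L_g$ indeed has the right source and target; naturality in $(L,\eta)$ is exactly the commutativity of~\eqref{comm tensor prod flechas} with $h=e$, and that each $\gamma(g)$ is a \emph{monoidal} natural isomorphism follows from the definition of $\eta\eta'$ together with the fact (already used in the construction of the tensor product) that $\psi^g(\one,-)=\psi^g(-,\one)=\id$, so on the unit components everything is an identity and on tensor components the square reduces to the defining triangle for $(\eta\eta')_{g,e}$.

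The substantive step is the hexagon axiom~\eqref{diagrama G-funtores} for $(U_e,\gamma)$. Evaluated at $(L,\eta)$, the two paths from $(gh)_*U_e(L,\eta)=(gh)_*(L_e)$ to $U_e((gh)_*(L,\eta))=L_{gh}$ are: (i) $\gamma(gh)_{(L,\eta)}=\eta_{gh,e}$ directly, composed with $U_e(\phi(g,h)_{(L,\eta)})$ — but $\phi(g,h)$ for $\C(G)$ is the identity since $\C(G)$ is \emph{strict} as a monoidal $G$-category, so this leg is just $\eta_{gh,e}$; and (ii) the composite $\phi(g,h)_{L_e}$, then $g_*(\gamma(h)_{(L,\eta)})=g_*(\eta_{h,e})$, then $\gamma(g)_{h_*(L,\eta)}$. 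Here I must be careful: $\gamma(g)$ is evaluated at the object $h_*(L,\eta)=(hL,h\eta)$, so $\gamma(g)_{h_*(L,\eta)}=(h\eta)_{g,e}=\eta_{g,eh}=\eta_{g,h}$. Thus leg (ii) equals $\eta_{g,h}\circ g_*(\eta_{h,e})\circ\phi(g,h)_{L_e}$, and the required equality $\eta_{gh,e}=\eta_{g,h}\circ g_*(\eta_{h,e})\circ\phi(g,h)_{L_e}$ is precisely diagram~\eqref{def de L} with $(g,h,k)$ replaced by $(g,h,e)$. I would also note $\gamma(e)=\id$ because $\eta_{e,g}=\id_{L_g}$ by definition of objects of $\C(G)$.

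Having established that $(U_e,\gamma)$ is a bona fide monoidal $G$-functor, the final claim that it is an equivalence of monoidal $G$-categories is immediate: by definition an equivalence of monoidal $G$-categories is a monoidal $G$-functor whose underlying functor is an equivalence of categories, and Proposition~\ref{prop equivalencia} already shows $U_e$ is (one half of) an adjoint equivalence of categories with quasi-inverse $\mathcal{F}$. I expect the only mild obstacle to be bookkeeping: keeping straight the two different uses of the letter $\eta$ (the datum of an object of $\C(G)$ versus the unit of the adjunction in Proposition~\ref{prop equivalencia}) and, more importantly, correctly computing $\gamma(g)$ at the shifted object $h_*(L,\eta)$, since the index shift $(h\eta)_{x,y}=\eta_{x,yh}$ is what makes the hexagon collapse onto~\eqref{def de L}. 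No genuinely new computation is needed beyond re-reading the coherence diagram~\eqref{def de L} at specialized indices.
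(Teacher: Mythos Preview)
Your proposal is correct and follows essentially the same route as the paper: you verify that $U_e$ is strict monoidal, that each $\gamma(g)$ is a monoidal natural isomorphism (via the defining triangle for $(\eta\eta')_{g,e}$), and that the hexagon~\eqref{diagrama G-funtores} reduces to~\eqref{def de L} with $k=e$ after using the strictness of $\C(G)$ and the index shift $(h\eta)_{g,e}=\eta_{g,h}$, then appeal to Proposition~\ref{prop equivalencia} for the equivalence. Your write-up is in fact more explicit than the paper's, particularly the computation of $\gamma(g)_{h_*(L,\eta)}$, which the paper leaves implicit.
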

\begin{proof}
It follows immediately from the definitions that  $\C(G)$ is a strict monoidal $G$-category and $U_e$ is an strict monoidal functor. 

By Proposition \ref{prop equivalencia} $U_e$ is an equivalence of categories. Thus, we only need to prove that  $(U_e,\gamma)$ is a functor of monoidal $G$-categories.  The naturality of the family of isomorphism  $$\gamma(g):=\{\gamma(g)_{(L_,\eta)}:=\eta_{g,e}\}_{(L,\eta)\in \Obj(\C(G))}$$ follows from the diagrams \eqref{comm tensor prod flechas}, taking $h=e$.  Let $(L,\eta)$ and $(T,\chi)$ be objects in $\C(G)$. The monoidallity of $\gamma(g)$ is equivalent to the equation $(\eta\chi)_{g,e}=\psi^g _{L_e,T_e}\circ (\eta_{g,e}\ot \chi_{g,e})$, that follows from the definition of $(\eta\chi)_{g,h}$. Finally, the diagrams \eqref{diagrama G-funtores} for $(U_e,\eta)$ are just the diagrams \eqref{def de L}  with $k=e$.
\end{proof}
If $(F,\gamma):\C\to \D$ is a monoidal $G$-functor, we  define a monoidal $G$-functor $(F,\gamma)(G):\C(G)\to \D(G)$ as follows: if $(L,\eta )\in \C(G)$, then $F(L)=\{F(L_g)\}_{g\in G}$ and $F(\eta_{g,h})\circ \gamma_{L_h}: g_*(F(L_h))\to g_*(F(L_h))$ for all $g,h\in G$.

\begin{remark}
If the monoidal $G$-category is fusion category (or more generally a finite tensor category) over a field $k$ and every monoidal equivalence $g_*$ is $k$-linear then $U_e$ is an equivalence of fusion categories (or more generally an equivalence of  finite tensor categories). Thus, Theorem \ref{main theorem} immediately implies coherence  for fusion categories with (not necessarily finite) group actions.
\end{remark}

The following statement is equivalent to Theorem \ref{coherencia}.
\begin{corollary}
The construction $\C\mapsto \C(G)$ defines a strict left adjoint to the  forgetful 2-functor from the 2-category of strict monoidal $G$-categories to the 2-category of monoidal $G$-categories and the components of the unit are equivalences of monoidal $G$-categories.
\end{corollary}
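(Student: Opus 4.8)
The plan is to upgrade the already-established facts about the single assignment $\C \mapsto \C(G)$ into the statement of an adjunction between 2-categories. Let me write $\mathbf{MonCat}_G$ for the 2-category of monoidal $G$-categories, monoidal $G$-functors, and monoidal natural transformations of $G$-categories, and $\mathbf{MonCat}_G^{\mathrm{str}}$ for the full sub-2-category of strict ones, with $\mathcal{U}\colon \mathbf{MonCat}_G^{\mathrm{str}} \to \mathbf{MonCat}_G$ the inclusion (the ``forgetful'' 2-functor). First I would assemble the data of the claimed left adjoint $\mathcal{L}\colon \mathbf{MonCat}_G \to \mathbf{MonCat}_G^{\mathrm{str}}$. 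On objects it is $\C \mapsto \C(G)$, which is a strict monoidal $G$-category by the propositions above. On 1-morphisms it is the assignment $(F,\gamma) \mapsto (F,\gamma)(G)$ already defined at the end of the section, and I would need to check the (routine) fact that this is functorial: $(\Id_\C,\id)(G) = \Id_{\C(G)}$ and $((F',\gamma')\circ(F,\gamma))(G) = (F',\gamma')(G)\circ (F,\gamma)(G)$, both verified componentwise using functoriality of the $g_*$ and the definitions of the involved monoidal and $G$-structures. On 2-morphisms, a monoidal natural transformation of $G$-categories $\varphi\colon (F,\gamma) \Rightarrow (L,\chi)$ should be sent to the family $\varphi(G)_{(L,\eta)} := \{\varphi_{L_g}\}_{g\in G}$; I would check this is a morphism in $\C(G)$ (i.e. it satisfies the square \eqref{comm tensor prod flechas} for the image structures — this reduces to naturality of $\varphi$ together with the compatibility of $\varphi$ with $\gamma$, $\chi$) and that it is a monoidal natural transformation of $G$-categories between $(F,\gamma)(G)$ and $(L,\chi)(G)$, which again unwinds to the componentwise version of the same condition for $\varphi$. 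This establishes that $\mathcal{L}$ is a (strict) 2-functor.

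Next I would exhibit the unit. For each monoidal $G$-category $\C$, Theorem \ref{main theorem} provides the equivalence of monoidal $G$-categories $(U_e,\gamma)\colon \C(G) \to \C$; what I actually want for the unit of the adjunction is a 1-morphism $\C \to \mathcal{U}\mathcal{L}\C = \C(G)$, and for that I use $\F\colon \C \to \C(G)$, $X \mapsto (\overline{X},\phi_X^{-1})$, which by Proposition \ref{prop equivalencia} is an adjoint equivalence of underlying categories with right adjoint $U_e$. So I would first promote $\F$ to a monoidal $G$-functor: equip $\F$ with the monoidal structure $\F_2(X,Y)\colon \F(X\ot Y)\to \F(X)\ot \F(Y)$ given componentwise by $\psi^g(X,Y)\colon g_*(X\ot Y)\to g_*(X)\ot g_*(Y)$ (one checks the hexagon \eqref{ax monoidal 1} componentwise from \eqref{g es monoidal}, and unitality from axiom (2) for $\psi^g$), and with the $G$-structure $\gamma^{\F}(g)\colon g_*\circ\F \to \F\circ g_*$ whose value at $X$ on component $h$ is the appropriate identity/coherence map coming from the definitions of the $G$-action on $\C(G)$ (here $(g\overline{X})_h = \overline{X}_{hg} = (hg)_*(X)$ and $\overline{g_*X}_h = h_*g_*(X)$, so the required isomorphism is $\phi(h,g)_X$); the two diagrams \eqref{diagrama G-funtores} for $(\F,\gamma^{\F})$ then reduce to instances of the pentagon \eqref{otro mas}. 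With $\F$ a monoidal $G$-functor, 2-naturality of the family $\{\F_\C\}$ in $\C$ is the statement that for every monoidal $G$-functor $(F,\gamma)\colon \C\to\D$ the square $\mathcal{U}\mathcal{L}(F,\gamma)\circ \F_\C = \F_\D \circ (F,\gamma)$ holds (on the nose, or up to a specified 2-isomorphism), which I verify componentwise: both sides send $X$ to the family $g\mapsto g_*(F(X))$ with structure maps built from $\gamma$ and $\phi$, and they agree by the definition of $(F,\gamma)(G)$.

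Finally I would verify the universal property in the form: for every strict monoidal $G$-category $\D$, composition with $\F_\C$ induces an equivalence (in fact, by strictness of the adjunction, an isomorphism) of categories
\begin{equation*}
\mathbf{MonCat}_G^{\mathrm{str}}(\C(G),\D) \;\xrightarrow{\;\sim\;}\; \mathbf{MonCat}_G(\C,\mathcal{U}\D).
\end{equation*}
The inverse sends a monoidal $G$-functor $(F,\gamma)\colon \C \to \D$ to the strict monoidal $G$-functor $\widehat{F}\colon \C(G)\to\D$ defined by $\widehat{F}(L,\eta) := F(L_e)$ with the $\eta$-data transported along $\gamma$ — concretely one checks that because $\D$ is strict, the object $(L,\eta)$ is determined up to canonical isomorphism by $L_e$ together with its $\eta$-structure, so $\widehat{F}$ is forced; then $\widehat{F}\circ \F_\C \cong (F,\gamma)$ and $\widehat{(\widehat{F}\circ\F_\C)} = \widehat{F}$ are both routine. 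Since $\F_\C$ is additionally an equivalence of monoidal $G$-categories (Theorem \ref{main theorem}, via its adjoint $U_e$), the components of the unit are equivalences, which is the last clause. The main obstacle is bookkeeping rather than conceptual: making the 2-functoriality of $\mathcal{L}$ and the 2-naturality of the unit precise requires carrying the three layers of structure ($\otimes$-constraints, the $\phi$-data, and the $\psi$-data) through every verification, and one must be careful that ``strict'' on the target side of the adjunction makes the relevant hom-categories rigid enough that the universal property holds strictly rather than merely up to equivalence; all of these checks, however, are componentwise instances of diagrams \eqref{otro mas}, \eqref{g es monoidal}, \eqref{compatibilidad psi y phi}, \eqref{diagrama G-funtores}, \eqref{def de L} and \eqref{comm tensor prod flechas} already in hand.
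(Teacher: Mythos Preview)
Your proposal is correct and considerably more detailed than what the paper does: the paper's entire proof of this corollary is a bare \qed, relying on the sentence ``The following statement is equivalent to Theorem \ref{coherencia}'' just before it. In other words, the paper treats the corollary as a repackaging of Theorem \ref{main theorem} and supplies no separate argument.

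The difference in approach is this. You build the 2-adjunction from scratch: you define $\mathcal{L}$ on 1- and 2-cells, promote $\F$ to a monoidal $G$-functor to serve as the unit, and then verify the universal property by constructing an explicit inverse $\widehat{F}=F\circ U_e$ to precomposition with $\F_\C$. The paper's implicit route is shorter: since $\mathcal{U}$ is the inclusion of a full sub-2-category, exhibiting a left adjoint amounts to producing, for each $\C$, an object $\C(G)$ of the subcategory together with a map $\F_\C:\C\to\C(G)$ such that precomposition with $\F_\C$ is an equivalence on hom-categories; but $\F_\C$ is an equivalence of monoidal $G$-categories (its adjoint $U_e$ is, by Theorem \ref{main theorem}, and adjoints of monoidal equivalences inherit the structure by \cite[Proposition 4.4.2]{Saavedra}), so precomposition with it is automatically an equivalence for every target $\D$. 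This bypasses the explicit description of $\mathcal{L}$ on 1- and 2-cells and the direct check of the triangle identities. Your approach has the advantage of actually writing down the 2-functor $\mathcal{L}$ (which the paper only hints at via the definition of $(F,\gamma)(G)$), at the cost of more bookkeeping. One small caution: your claim that the hom-category comparison is an \emph{isomorphism} rather than merely an equivalence is not quite right, since $\F_\C\circ U_e$ is only naturally isomorphic to $\Id_{\C(G)}$; the adjunction is a biadjunction, and ``strict'' in the corollary refers to $\mathcal{L}$ being a strict 2-functor, not to strictness of the hom-comparison.
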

\qed

Let $(H,P,\partial)$ be a crossed module, $G$ a group and $\{(\alpha_x,\phi_x, \theta_{x,y})\}_{x,y\in G}$ a weak action of $G$ on $(H,P,\partial)$ (see Example \ref{Example action over pointed fusion categories}). 

The strict monoidal $G$-category $\C(H,P,\partial)(G)$ is a strict monoidal category where every arrow is invertible and every object has a strict inverse. This kind of monoidal categories are called strict categorical group or strict 2-groups (see \cite{baez}). Since the notation of strict categorical group and crossed module are essentially equivalents (see \cite{crossed} for details), we can construct a new crossed module $(H(G),P(G),\partial')$ with a strict $G$ action such that is is weak equivalent to   $(H,P,\partial)$. In fact, $P(G)$ consist of the group of
objects of $\C(H,P,\partial)(G)$, $H(G)$ is  the group of morphisms $X \to  \one$ into the identity object of $\C(H,P,\partial)(G)$ 
with product: 
\[(X\overset{a}{\to} \one)\cdot (Y\overset{b}{\to} \one)= X\otimes Y\overset{a\otimes b}{\to}\one\otimes \one =\one, \] action \[^Y(X\overset{a}{\to}\one)= Y\otimes X\otimes Y^{-1} \overset{\id_Y\otimes a \otimes \id_{Y^{-1}}}{\xrightarrow{\hspace*{1.5cm}}} Y\otimes \one \ot Y^{-1}=\one.\]
and the homomorphism $\partial' : H(G) \to P(G)$ sending $X \to \one$ into $X$.

\section{Coherence for Braided $G$-crossed categories}



\subsection{$G$-crossed monoidal categories}


A $G$-graded monoidal category is  a  monoidal category $\C$ endowed with a decomposition $\C=\coprod_{g\in G} \C_g$  (coproduct of categories) such that
 \begin{itemize}
 \item $\mathbf{1} \in \C_e$,
 \item $\C_g\otimes \C_h \subset \C_{gh}$ for all $g,h\in G$.
 \end{itemize}
If $k$ is a commutative ring and $\C$ is a $k$-linear abelian category, the coproduct $\C=\coprod_{g\in G} \C_g$ is taken in the category of $k$-linear abelian categories.

\begin{defin}
A $G$-crossed monoidal categories is a $G$-graded monoidal category  with a structure of $G$-category such that $g_*(\C_h)\subset \C_{ghg^{-1}}$ for all $g,h\in G$. 

A $G$-crossed monoidal category is called $k$-linear if  $\C$ is a $k$-linear
category and the functors $g_*$ are $k$-linear for each $g\in G$.

Let $\C$ and $\D$ be $G$-crossed monoidal categories. An equivalence of monoidal $G$-categories $F:\C\to \D$ is an equivalence of  $G$-crossed monoidal categories if $F(\C_g)\subset \D_g$ for all $g\in G$.
\end{defin}

In Example \ref{Example action over pointed fusion categories} we define the notion of a weak action of a group $G$ on a crossed module $(H,P,\partial)$ and the associated monoidal $G$-category $\C(H,P,\partial)$.
\begin{lem}
Let $G$ be a group, $(H,P,\partial)$ be a crossed module and $\{(\alpha_x,\phi_x, \theta_{x,y})\}_{x,y\in G}$ a weak action of $G$ on $(H,P,\partial)$. 
\begin{enumerate}
    \item The $G$-gradings of $\C(H,P,\partial)$ are in correspondence with group homomorphisms   \[\operatorname{gr}:P\to G\] such that   $\operatorname{Im}(\partial)\subset \ker(\operatorname{gr})$.
    \item A $G$-grading given by a group homomorphism $\operatorname{gr}:P\to G$, defines a $G$-crossed monoidal structure on $\C(H,P,\partial)$ if and only if \[\operatorname{gr}(\phi_x(g))=x\operatorname{gr}(g)x^{-1},\] for all $x\in G, g\in P$.
\end{enumerate}  
\end{lem}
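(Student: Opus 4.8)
The plan is to unwind the definitions of the category $\C(H,P,\partial)$ and of $G$-grading/$G$-crossed structure, and to observe that everything is forced on the level of objects, i.e. on the group $P$.

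\textbf{Part (1).} First I would recall that a $G$-grading on a monoidal category $\C$ means a decomposition $\C=\coprod_{g\in G}\C_g$ with $\one\in\C_e$ and $\C_g\otimes\C_h\subset\C_{gh}$. Since $\C(H,P,\partial)$ is a \emph{connected} groupoid-like monoidal category whose objects form the group $P$ and whose hom-sets $\Hom(g,g')$ are nonempty precisely when $g'\in\partial(H)g$ (because a morphism $(h,g):g\to\partial(h)g$ exists for every $h\in H$), any grading must send all of $\partial(H)$ into $\C_e$: if $g$ and $g'$ lie in the same connected component they must lie in the same graded piece, as $\C_g$ are full subcategories closed under isomorphism and there are no morphisms between distinct components. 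Conversely, fixing a grading amounts to assigning to each object $g\in P$ a group element $\operatorname{gr}(g)\in G$ so that $\operatorname{gr}$ is constant on $\partial(H)$-cosets (equivalently $\operatorname{Im}(\partial)\subset\ker(\operatorname{gr})$), with $\operatorname{gr}(1)=e$ forced by $\one\in\C_e$ and $\operatorname{gr}(gg')=\operatorname{gr}(g)\operatorname{gr}(g')$ forced by $\C_{\operatorname{gr}(g)}\otimes\C_{\operatorname{gr}(g')}\subset\C_{\operatorname{gr}(gg')}$ together with the fact that $g\otimes g'=gg'$ on objects. So I would just check that the multiplicativity and normality condition is exactly the data of a group homomorphism $\operatorname{gr}:P\to G$ with $\operatorname{Im}(\partial)\subset\ker(\operatorname{gr})$, and that morphisms cause no further constraint since tensoring morphisms $(h,g)\otimes(h',g')$ lands in the component determined by the targets, consistently.

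\textbf{Part (2).} Given such a $\operatorname{gr}$, the $G$-action on $\C(H,P,\partial)$ coming from the weak action is by the strict monoidal functors $F_{(\alpha_x,\phi_x)}$ which on objects act as $g\mapsto\phi_x(g)$. The $G$-crossed axiom demands $x_*(\C_h)\subset\C_{xhx^{-1}}$ for all $x,h\in G$; unwinding on objects, $x_*$ sends an object $g$ of degree $\operatorname{gr}(g)$ to the object $\phi_x(g)$, which must then have degree $x\operatorname{gr}(g)x^{-1}$. Hence the crossed axiom holds if and only if $\operatorname{gr}(\phi_x(g))=x\operatorname{gr}(g)x^{-1}$ for all $x\in G,\ g\in P$. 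I would note the converse direction is immediate: if this identity holds then each $x_*$ restricts to a functor $\C_h\to\C_{xhx^{-1}}$ (the condition on morphisms is automatic because $F_{(\alpha_x,\phi_x)}$ is already a monoidal functor and morphisms only connect objects in the same component), so the $G$-category structure together with the grading is $G$-crossed by definition.

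\textbf{Main obstacle.} There is no serious obstacle here; the only point requiring a little care is the claim that a grading of $\C(H,P,\partial)$ is completely determined by its effect on objects, i.e. that no extra information or constraint is hidden in the morphisms. I would justify this by the remark above that $\Hom_{\C(H,P,\partial)}(g,g')\neq\emptyset$ iff $g$ and $g'$ differ by an element of $\operatorname{Im}(\partial)$, so the connected components of $\C(H,P,\partial)$ are exactly the cosets $P/\operatorname{Im}(\partial)$, and a decomposition into full subcategories closed under isomorphism is the same as a function on these components; compatibility with $\otimes$ on objects then gives the homomorphism, and compatibility with $\otimes$ on morphisms is automatic. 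The rest is a direct translation of the defining axioms.
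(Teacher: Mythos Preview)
Your proposal is correct and follows essentially the same approach as the paper: identify the isomorphism classes of $\C(H,P,\partial)$ with $P/\operatorname{Im}(\partial)$ so that a $G$-grading is the same as a group homomorphism $P/\operatorname{Im}(\partial)\to G$, and then read off the crossed condition on objects. The paper's proof is terser but the underlying argument is identical.
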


\begin{proof}
The group of isomorphism classes of objects of $\C(H,P,\partial)$ is $P/\operatorname{Im}(\partial)$. Since $\C(H,P,\partial)$ is a groupoid, $G$-gradings correspond with group morphisms $ P/\operatorname{Im}(\partial)\to G$. Thus, $G$-gradings correspond with groups morphism $\gr:P\to G$ such that $\operatorname{Im}(\partial)\subset \ker(\operatorname{gr})$.

The second part of the lemma  follows immediately from the definition.
\end{proof}

Let $\C$ and $\D$ be $G$-crossed monoidal categories. A monoidal $G$-functor $(F,\eta):\C\to \D$ is a functor of $G$-crossed monoidal categories if $F(\C_g)\subset \D_g$ for all $g\in G.$ We  say that $(F,\eta)$ is an equivalence of $G$-crossed monoidal categories if $F$ is an equivalence of categories.

A $G$-crossed monoidal categories is called strict if it is strict as a monoidal $G$-category.

\begin{corollary}[Coherence for $G$-crossed monoidal categories]\label{Coherence for $G$-crossed monoidal categories}
Let $G$ be a  group. If $\C$ is a $G$-crossed monoidal category, $\C(G)$ is a strict $G$-crossed monoidal category equivalent to $\C$. If $\C$ is $k$-linear, $\C(G)$ is $k$-linear and equivalent to $\C$ as $k$-linear categories.
\end{corollary}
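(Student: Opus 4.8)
The plan is to deduce Corollary \ref{Coherence for $G$-crossed monoidal categories} from Theorem \ref{main theorem} by equipping $\C(G)$ with a $G$-grading refining its already-established strict monoidal $G$-category structure. First I would observe that since $\C=\coprod_{g\in G}\C_g$ is a $G$-crossed monoidal category, each functor $x_*$ sends $\C_g$ into $\C_{xgx^{-1}}$, so for an object $(L,\eta)\in\C(G)$ with $L=\{L_g\}_{g\in G}$ the isomorphisms $\eta_{g,h}\colon g_*(L_h)\to L_{gh}$ force a rigid relation among the degrees of the $L_g$: if $L_e\in\C_a$ then, applying $g_*$ and using $\eta_{g,e}\colon g_*(L_e)\xrightarrow{\sim}L_g$, we get $L_g\in\C_{gag^{-1}}$. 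Conversely any such coherent family determines a well-defined degree. This suggests the definition $\C(G)_a:=\{(L,\eta)\in\C(G): L_e\in\C_a\}$, equivalently $L_g\in\C_{gag^{-1}}$ for all $g$.

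Next I would check the three axioms of a $G$-crossed monoidal category for this grading. The decomposition $\C(G)=\coprod_{a\in G}\C(G)_a$ holds because every object has a well-defined $e$-component lying in a unique $\C_a$ (here one uses that $\C$ itself is the coproduct, so $L_e$ has a well-defined degree), and a morphism $f=\{f_g\}$ in $\C(G)$ between objects of different degrees must be zero in the $k$-linear case / cannot exist in the plain case, since $f_e\colon L_e\to T_e$ is a morphism in $\C$ between objects of different homogeneous degree. The unit $(\overline{\one},\id_\one)$ lies in $\C(G)_e$ since $\overline{\one}_e=\one\in\C_e$. For the tensor product, if $(L,\eta)\in\C(G)_a$ and $(L',\eta')\in\C(G)_b$ then $(LL')_e=L_e\ot L'_e\in\C_a\otimes\C_b\subset\C_{ab}$, so $(L,\eta)\otimes(L',\eta')\in\C(G)_{ab}$. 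Finally, the $G$-action on $\C(G)$ is $x_*(L,\eta)=(xL,x\eta)$ with $(xL)_h=L_{hx}$, so $(xL)_e=L_x\in\C_{xax^{-1}}$, giving $x_*\big(\C(G)_a\big)\subset\C(G)_{xax^{-1}}$, exactly the crossed compatibility. The $k$-linear case is identical, noting that $\C(G)$ inherits $k$-linearity componentwise and the functors $x_*$ on $\C(G)$ are $k$-linear because the $x_*$ on $\C$ are.

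It then remains to verify that the equivalence of monoidal $G$-categories $(U_e,\gamma)\colon\C(G)\to\C$ from Theorem \ref{main theorem} respects gradings, i.e. $U_e(\C(G)_a)\subset\C_a$; but this is immediate since $U_e(L,\eta)=L_e$, which lies in $\C_a$ by the very definition of $\C(G)_a$. Hence $(U_e,\gamma)$ is an equivalence of $G$-crossed monoidal categories, and in the $k$-linear setting it is in addition an equivalence of $k$-linear categories by Proposition \ref{prop equivalencia} together with the componentwise $k$-linear structure on $\C(G)$. I expect no serious obstacle here: the only point requiring a little care is confirming that the grading on $\C(G)$ is genuinely well-defined, i.e. that one cannot have an object $(L,\eta)$ whose components $L_g$ fail to have a consistent single degree $a$ with $L_g\in\C_{gag^{-1}}$ — this is forced by the isomorphisms $\eta_{g,e}$ together with $x_*(\C_g)\subset\C_{xgx^{-1}}$, and the verification is short. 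I would present the proof as: define the grading, check it is a $G$-crossed structure, invoke Theorem \ref{main theorem} and Proposition \ref{prop equivalencia} for the equivalence, and handle the $k$-linear refinement in one sentence.
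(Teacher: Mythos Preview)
Your proposal is correct and follows essentially the same approach as the paper: you define the grading by $\C(G)_a=\{(L,\eta):L_e\in\C_a\}$, verify the $G$-crossed axioms, and invoke Theorem \ref{main theorem} and Proposition \ref{prop equivalencia} for the equivalence, exactly as the paper does. The paper's proof is much terser (it simply states the grading and asserts the result), whereas you spell out the verifications, including the consistency of degrees via $\eta_{g,e}$ and the crossed compatibility $x_*(\C(G)_a)\subset\C(G)_{xax^{-1}}$ from $(xL)_e=L_x$; this added detail is correct and helpful but does not constitute a different argument.
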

\begin{proof}
The strict monoidal $G$-category $\C(G)$ is a $G$-crossed monoidal category with $\C(G)_g=\{(L,\eta): L_e\in \C_g\}$ for all $g\in G$.

If each $g_*$ is $k$-linear, the strict monoidal $G$-category $\C(G)$ is $k$-linear  and the equivalence of categories  $U_e:\C(G)\to \C$ is a $k$-linear equivalence such that $U_e(\C(G)_g)\subset \C_g$ for all $g\in G$.  Thus, the corollary follows from Theorem \ref{main theorem}
\end{proof}
\subsection{Braided $G$-crossed categories}
\begin{defin}
Let $\C$ be a $G$-crossed monoidal category. A $G$-braiding is a family of  isomorphisms \[c:=\{c_{X,Y}:X\ot Y\to g_*(Y)\ot X\}_{Y\in \C, X\in \C_{g}, g\in G}\] natural in $X$ and $Y$, such that the diagrams

\begin{equation}\label{axiom 1 trenza}
\begin{tikzcd}
g_*(X\ot Z) \ar{dd}{\psi^g(X,Z)} \ar{rrrr}{g_*(c_{X,Z})} &&&& g_*(h_*(Z)\ot X)\ar{dd}{\psi^g(h_*Z, h_*(X))}\\\\   
g_*(X)\ot g_*(Z) \ar{dd}{c_{g_*(X),g_*(Z)}} &&&& g_*h_*(Z)\ot g_*(X) \\\\  
(ghg^{-1})_*g_*(Z)\ot g_*(X) \ar{rrrr}{\phi(ghg^{-1},g)_Z^{-1}\ot \id_{g_*(Z)} }  &&&& (gh)_*(Z)\ot g_*(X) \ar{uu}{\phi(g,h)_X\ot \id_{g_*(X)}}
\end{tikzcd}
\end{equation}
commute  for all $X\in \C_h, Z\in \C, g,h\in G$, the diagrams
\begin{equation}\label{trenzas G}
\begin{tikzcd}
X \ot Y\ot Z \ar{rr}{c_{X,Y\ot Z}} \ar{d}{c_{X,Y}\ot\id_{Z}} &&   g_*(Y\ot Z)\ot X \ar{d}{\psi^g(Y,Z)\ot \id_X} \\
g_*(Y)\ot X\ot Z \ar{rr}{\id_{g_*(Y)}\ot c_{X,Z}}&&  g_*(Y)\ot g_*(Z) \ot X
\end{tikzcd}
\end{equation}
commute for all $X\in \C_g, Y,Z\in \C$ and the diagrams 

\begin{equation}\label{trenzas 2}
\begin{tikzcd}
X \ot Y\ot Z \ar{rr}{c_{X\ot Y, Z}} \ar{d}{\id_X\ot c_{Y,Z}} &&   (gh)_*(Z)\ot X\ot Y  \ar{d}{\phi(g,h)_Z\ot \id_{X\ot Y}} \\
X\ot h_*(Z)\ot Y\ar{rr}{c_{X,h_*(Z)}\ot \id_Y}&&  g_*h_*(Z)\ot X \ot Y
\end{tikzcd}
\end{equation}
comute for all $X\in \C_g, Y\in \C_h, Z\in \C, g,h\in G$.
\end{defin}

A braided $G$-crossed category is a $G$-crossed category with a $G$-braiding.
A braided $G$-crossed monoidal category $(\C,c)$ is called strict if $\C$ is a strict monoidal $G$-category. In a strict braided $G$-crossed monoidal category we have that 
\begin{itemize}
    \item $g_*(c_{X,Z})=c_{g_*(X),g_*(Z)}$ 
    \item $c_{X,Y\ot Z}=(\id_Y\ot c_{X,Z})\circ (c_{X,Y}\ot \id_{Z})$
    \item $c_{X\ot Y,Z}=(c_{X,h_*(Z)}\ot \id_{Y}) \circ (\id_X\ot c_{Y,Z})$
\end{itemize}for all $X\in \C$, $Y\in \C_g, Z\in \C_h,$ $g,h\in G$.
Let $\C$ and $\D$ be braided $G$-crossed categories. 

\begin{ex}
\begin{enumerate}
    \item Let $(H,P,\partial)$ be a crossed module. Since $^{\partial(h)}h'h=hh'$ for all $h,h'\in H$, the discrete monoidal category $\overline{H}$ is a strict braided $P$-crossed category.
    \item Let $(\mathcal{B},c)$ be a braided monoidal category and $G$ a group with an action on $\mathcal{B}$ by braided autoequivalences. Then $\mathcal{B}$, with all objects graded only by $e\in G$ and $G$-braiding $c$ is a braided $G$-crossed monoidal category.
    
\end{enumerate}
\end{ex}

A functor of $G$-crossed monoidal categories  $(F,\eta):\C\to \D$ is a functor of braided $G$-crossed monoidal categories if for all $X\in \C_g, Y\in \C, g\in G$ the diagrams
\begin{equation}\label{braid G-functor}
\begin{tikzcd}
F(X\ot Y)  \ar{rr}{F(c_{X,Y})} \ar{dd}{F_2(X,Y)}&&  \ar{dd}{F_2(g_*(Y),X)} F(g_*(Y)\ot X) \\\\
F(X)\ot F(Y)  \ar{rd}{c_{F(X),F(Y)}}&& F(g_*(Y))\ot F(X)\\
& g_*(F(Y))\ot F(X)\ar{ur}{ \eta(g)_Y\ot \id_{F(X)}}&
\end{tikzcd}
\end{equation}
commute.

We  say that $(F,\eta)$ is an equivalence of braided $G$-crossed monoidal categories if $F$ is an equivalence of categories.

\begin{theorem}[Coherence for braided $G$-crossed  categories]\label{coherence G-braid}
Let $G$ be a  group. Every braided $G$-crossed  category is equivalent to a strict braided $G$-crossed category.
\end{theorem}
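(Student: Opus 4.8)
The plan is to leverage the strictification machinery already developed for monoidal $G$-categories. Given a braided $G$-crossed category $(\C, c)$, we first forget the braiding and the grading, apply the construction $\C \mapsto \C(G)$ from Section 4, and then show that $\C(G)$ inherits a $G$-grading and a $G$-braiding making it a \emph{strict} braided $G$-crossed category equivalent to $\C$. The grading part is exactly Corollary \ref{Coherence for $G$-crossed monoidal categories}: set $\C(G)_g = \{(L,\eta) : L_e \in \C_g\}$, so it remains only to transport the braiding $c$ along the equivalence $U_e : \C(G) \to \C$ and verify it becomes strict.

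Concretely, I would define a candidate $G$-braiding on $\C(G)$ by the formula forced by compatibility with $U_e$: for $(L,\eta) \in \C(G)_g$ and $(T,\chi) \in \C(G)$, the component of the strict braiding $\tilde c_{(L,\eta),(T,\chi)} : (L,\eta)\otimes(T,\chi) \to g_*(T,\chi)\otimes(L,\eta)$ should be the family, indexed by $k \in G$, given in terms of the original braiding $c_{L_k, T_k}$ (which lands in $(\mathrm{gr}\,L_k)_*(T_k)\otimes L_k$ inside $\C$) together with the structure isomorphisms $\eta, \chi, \psi, \phi$ needed to land in $(g_*(T,\chi)\otimes(L,\eta))_k = T_{kg}\otimes L_k$. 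Here one must be careful: the degree of $(L,\eta)$ as an object of $\C(G)$ is $g = \mathrm{gr}(L_e)$, whereas the $k$-th component $L_k = \eta_{k,e}$-image of $k_*(L_e)$ lives in degree $k g k^{-1}$ by the $G$-crossed condition $k_*(\C_h)\subset \C_{khk^{-1}}$, so the twists in the formula for $\tilde c$ at index $k$ naturally involve the element $kgk^{-1}$, and the isomorphisms $\phi(g,h)$, $\psi^g$ are exactly what reconcile these. One checks directly, using the definition of $U_e$ and Proposition \ref{prop equivalencia}, that $U_e$ sends $\tilde c$ to $c$, and then that $\tilde c$ is strict because all the $\psi^g$ and $\phi(g,h)$ appearing in $\C(G)$ are identities (the defining diagrams \eqref{trenzas G}, \eqref{trenzas 2}, \eqref{axiom 1 trenza} for $\C(G)$ collapse to the three bulleted strictness identities).

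The remaining work is routine but not trivial verification: that $\tilde c$ is natural in both arguments (using naturality of $c$ together with the morphism condition \eqref{comm tensor prod flechas}), that each $\tilde c_{(L,\eta),(T,\chi)}$ is indeed a morphism in $\C(G)$ (i.e.\ the family over $k \in G$ is compatible with the $\eta$, $\chi$ structure maps — this uses the hexagon-type axioms \eqref{trenzas G} and \eqref{trenzas 2} for $c$ applied at the level of components), and that the three braided $G$-crossed axioms \eqref{axiom 1 trenza}, \eqref{trenzas G}, \eqref{trenzas 2} hold for $(\C(G), \tilde c)$. Since these are equations between families of morphisms in $\C$ indexed by $G$, and $U_e$ is fully faithful with $U_e(\tilde c) = c$, each such axiom for $\C(G)$ reduces to the corresponding axiom for $\C$ — so in fact the verification can be shortened considerably by the observation that a fully faithful $G$-crossed functor reflects the $G$-braiding axioms.

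I expect the main obstacle to be the bookkeeping of \emph{which group element indexes each twist}: an object $(L,\eta)$ of $\C(G)$ has a single well-defined degree $g$, but its components $L_k$ scatter across the conjugates $kgk^{-1}$, so writing the formula for $\tilde c$ at component $k$ requires inserting the precise composite of $\phi(\cdot,\cdot)$'s and $\eta$'s/$\chi$'s that moves $c_{L_k,T_k} : L_k\otimes T_k \to (kgk^{-1})_*(T_k)\otimes L_k$ over to $T_{kg}\otimes L_k$, and getting this composite exactly right (so that it is simultaneously a morphism in $\C(G)$, natural, and $U_e$-compatible) is the delicate point. Once that formula is pinned down, strictness and the axioms follow formally. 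An alternative, and perhaps cleaner, route avoiding the explicit formula: observe that $U_e : \C(G) \to \C$ is an equivalence of monoidal $G$-categories by Theorem \ref{main theorem}, hence there is an inverse monoidal $G$-functor $\F$ (Proposition \ref{prop equivalencia}); transport $c$ along $\F$ to get a $G$-braiding on $\C(G)$, and then separately verify it is strict by direct computation with the definition of $\C(G)$'s structure maps. I would present the explicit construction since it makes the strictness transparent, but cite the equivalence-transport principle to organize the axiom checks.
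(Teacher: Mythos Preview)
Your proposal is correct and follows the same overall strategy as the paper: apply Corollary~\ref{Coherence for $G$-crossed monoidal categories} to get the strict $G$-crossed structure on $\C(G)$, then transport the $G$-braiding along the equivalence $(U_e,\mathcal F)$ and check the axioms via full faithfulness. The one notable difference is that the paper adopts as its \emph{primary} definition what you describe as the ``alternative, and perhaps cleaner, route'': rather than building $\tilde c$ componentwise from $c_{L_k,T_k}$ (which, as you correctly anticipate, drags the conjugate $kgk^{-1}$ into every formula), the paper defines the $h$-th component of $\tilde c_{(L,\eta),(T,\chi)}$ by applying $h_*$ to $c_{L_e,T_e}$ and then correcting with $\psi^h$, $\chi_{g,e}$, $\eta_{h,-}$, $\chi_{h,-}$ --- i.e.\ the explicit transport of $c$ along $\mathcal F$. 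Since only the identity component $L_e\in\C_g$ is ever fed into the original braiding, the conjugation bookkeeping you flag as the main obstacle simply never arises. Your componentwise formula and the paper's transported formula coincide (axiom~\eqref{axiom 1 trenza} applied with $g\mapsto h$ is precisely the identity relating them, and full faithfulness of $U_e$ guarantees uniqueness anyway), so your approach is valid; the paper's presentation is just shorter.
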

\begin{proof}
Using the adjoint equivalence of monoidal $G$-crossed categories, $(U_e,\mathcal{F})$, we will transport the $G$-braiding of $\C$ to a $G$-braiding on $\C(G)$.
The $G$-braiding on  $\C(G)$ is defined by the commutativity of the diagram
\[
\xymatrixrowsep{0.6in}
\xymatrixcolsep{0.6in}
\xymatrix{
L_h\ot T_h    \ar[rr]^{\tilde{c}_{L_h,T_h}} && T_{hg}\ot L_h\\
h_*(L_e)\ot h_*(T_e)\ar[u]^{\eta_{h,e}\ot \chi_{h,e}} &&h_*(T_g)\ot h_*(L_e) \ar[u]_{\eta_{h,g}\ot \chi_{h,e}}\\
h_*(L_e\ot T_e) \ar[u]^{\psi^h(L_e,T_e)} \ar[rd]_{h_*(c_{L_e,T_e})} && h_*(T_g\ot L_e)\ar[u]_{\psi^h(L_g,T_e)} \\
&h_*(g_*(T_e)\ot L_e) \ar[ru]_{h_*(\eta_{g,e}\ot \id_{L_e})} &
}
\] 
where $(L,\eta)\in \C(G)_g, (T,\chi)\in \C(G)$, $g\in G$. 
The monoidal $G$-functor  $(U_e,\gamma):\C(G)\to \C$  is a functor of braided $G$-categories. Thus,  $(U_e,\gamma)$ is an equivalence of braided $G$-crossed categories.
\end{proof}
\begin{remark}
Using the same ideas in the proof of Theorem \ref{braid G-functor}, without significant changes a coherence theorem for $G$-ribbon crossed categories can be proved.
\end{remark}

\begin{ex}
Let $(H,P,\partial)$ be a crossed module, $G$ be a group,  $\{(\alpha_x,\phi_x, \theta_{x,y})\}_{x,y\in G}$ be a weak $G$-action and $\gr: P\to G$ be a group homomorphism such that \[\operatorname{Im}(\gr)\subset \operatorname{ker}(\partial), \quad \operatorname{gr}(\phi_x(g))=x\operatorname{gr}(g)x^{-1},\] for all $x\in G, g\in P$. 

A $G$-braiding is a map \[\{-,-\}:P\times P\to H\] satisfying the
following axioms:
\begin{enumerate}
\item[(a)] $\partial(\{x,y\})xy=\phi_{\gr(x)}(y)x,$
    \item[(b)] $\{x,y\}^{\phi_{\gr(x)}(y)}h=h\{\partial(h)x,y\}$,
   \item[(c)] $^xh\{x,\partial(h)y\}=\{x,y\}\alpha_{\gr(x)}(h)$,
   \item[(d)] $\{\phi_g(x),\phi_g(y)\}=\alpha_g(\{x,y\})\theta_{g,\gr(x)}^{-1}\theta_{g\gr(x)g^{-1},g}$,
   \item[(e)] $\{x,yz\}=\{x,y\}^{\phi_{\gr(x)}(y)}\{x,z\}$,
   \item[(f)] $\{xy,z\}\theta_{\gr(x),\gr(y)}(z)=\ ^x\{y,z\}\{x,\phi_{\gr(y)}(z)\}$,
\end{enumerate}for all $x,y,z\in P,$   $g\in G$.


If $G$ is a trivial group, we obtain the notion of braiding of crossed module, see \cite{braidcrossed}.

If the $G$-action is strict, that is,  $\theta_{x,y}(g)=e$, for all $x,y\in G, g\in P$, then we obtain the notion of 2-crossed module of Conduch\'e \cite{2-crossed}. 

Every $G$-braiding in a crossed module $(P,H,\partial)$ with a weak action of $G$, induces a $G$-braiding \[c_{x,y}:=(\{x,y\},xy):xy\to \phi_{\gr(x)}(y)x, \quad x,y\in P\]of $\C(P,H,\partial)$. In fact, (a) says that the target of $(\{x,y\},xy)$ is $\phi_{\gr(x)}(y)x$. Condition (b) and (d) are naturality of $c_{x,y}$. Commutativity of diagrams \eqref{axiom 1 trenza}, \eqref{trenzas G} and \eqref{trenzas 2} are equivalent to axioms (d), (e) and (f), respectively. 
\end{ex}
Recall that a categorical groups is a rigid monoidal groupoid, (see \cite{baez} for more details). Every categorical groups is equivalent to a strict categorical groups, that is, to a categorical groups where every object has a strict inverse respect to the tensor product.

Applying Theorem  \ref{coherence G-braid} we have that every braided $G$-crossed categorical groups is equivalent to a strict braided $G$-crossed strict categorical groups $\C$. Since every strict braided $G$-crossed strict categorical group defines a 2-crossed module, (see \cite[Example 2.5 (ii)]{Pilar}), associated to every braided $G$-crossed categorical group there is a 2-crossed module.


\begin{thebibliography}{10}

\bibitem{baez}
J.~C. Baez and A.~D. Lauda.
\newblock Higher-dimensional algebra. {V}. 2-groups.
\newblock {\em Theory Appl. Categ.}, 12:423--491, 2004.

\bibitem{barkeshli2014symmetry}
M.~Barkeshli, P.~Bonderson, M.~Cheng, and Z.~Wang.
\newblock Symmetry, defects, and gauging of topological phases.
\newblock {\em arXiv preprint arXiv:1410.4540}, 2014.

\bibitem{braidcrossed}
R.~Brown and N.~D. Gilbert.
\newblock Algebraic models of {$3$}-types and automorphism structures for
  crossed modules.
\newblock {\em Proc. London Math. Soc. (3)}, 59(1):51--73, 1989.

\bibitem{Pilar}
P.~Carrasco and J.~Mart\'\i~nez Moreno.
\newblock Categorical {$G$}-crossed modules and 2-fold extensions.
\newblock {\em J. Pure Appl. Algebra}, 163(3):235--257, 2001.

\bibitem{2-crossed}
D.~Conduch\'e.
\newblock Modules crois\'es g\'en\'eralis\'es de longueur {$2$}.
\newblock In {\em Proceedings of the {L}uminy conference on algebraic
  {$K$}-theory ({L}uminy, 1983)}, volume~34, pages 155--178, 1984.

\bibitem{cui2015gauging}
S.~X. Cui, C.~Galindo, J.~Y. Plavnik, and Z.~Wang.
\newblock On gauging symmetry of modular categories.
\newblock {\em arXiv preprint arXiv:1510.03475. To appear in Communications in
  Mathematical Physics}, 2015.

\bibitem{crossed}
M.~Forrester-Barker.
\newblock Group objects and internal categories.
\newblock {\em arXiv preprint math/0212065}, 2002.

\bibitem{lan2016modular}
T.~Lan, L.~Kong, and X.-G. Wen.
\newblock Modular extensions of unitary braided fusion categories and 2+ 1d
  topological/spt orders with symmetries.
\newblock {\em arXiv preprint arXiv:1602.05936}, 2016.

\bibitem{CWM}
S.~Mac~Lane.
\newblock {\em Categories for the working mathematician}, volume~5.
\newblock Springer Science \& Business Media, 1978.

\bibitem{cohemac}
S.~MacLane.
\newblock Natural associativity and commutativity.
\newblock {\em Rice Institute Pamphlet-Rice University Studies}, 49(4), 1963.

\bibitem{Saavedra}
N.~Saavedra~Rivano.
\newblock {\em Cat\'egories {T}annakiennes}.
\newblock Lecture Notes in Mathematics, Vol. 265. Springer-Verlag, Berlin-New
  York, 1972.

\bibitem{MR2674592}
V.~Turaev.
\newblock {\em Homotopy quantum field theory}, volume~10 of {\em EMS Tracts in
  Mathematics}.
\newblock European Mathematical Society (EMS), Z\"urich, 2010.
\newblock Appendix 5 by Michael M{\"u}ger and Appendices 6 and 7 by Alexis
  Virelizier.

\bibitem{MR2959440}
V.~Turaev and A.~Virelizier.
\newblock On 3-dimensional homotopy quantum field theory, {I}.
\newblock {\em Internat. J. Math.}, 23(9):1250094, 28, 2012.

\bibitem{MR3195545}
V.~Turaev and A.~Virelizier.
\newblock On 3-dimensional homotopy quantum field theory {II}: {T}he surgery
  approach.
\newblock {\em Internat. J. Math.}, 25(4):1450027, 66, 2014.

\end{thebibliography}
\end{document}